\newcommand{\R}{\mathbb{R}}
\newcommand{\C}{\mathbb{C}}
\newcommand{\Z}{\mathbb{Z}}
\newcommand{\cO}{\mathcal{O}}
\newcommand{\lang}{\langle}
\newcommand{\rang}{\rangle}
\newcommand{\x}{{\bf x}}
\newtheorem{thm}{Theorem}[section]
\newtheorem{cor}[thm]{Corollary}
\newtheorem{lem}[thm]{Lemma}
\newtheorem*{lem*}{Lemma}
\theoremstyle{remark}
\newtheorem*{rem*}{Remark}
\theoremstyle{definition}
\date{\today}
\author{Holley Friedlander}
\address{Dickinson College}
\email{friedlah@dickinson.edu}
\title{On the $p$-parts of Weyl group multiple Dirichlet series}
\begin{document}

\baselineskip=17pt

\begin{abstract}
We study the structure of $p$-parts of Weyl group multiple Dirichlet series. In particular, we extend results of Chinta, Friedberg, and Gunnells \cite{ppart} and show, in the stable case, that the $p$-parts of Chinta and Gunnells \cite{wmnds} agree with those constructed using the crystal graph technique of Brubaker, Bump, and Friedberg \cite{wmds2,wmds4}. In this vein, we give an explicit recurrence relation on the coefficients of the $p$-parts, which allows us to describe the support of the $p$-parts and address the extent to which they are uniquely determined.
\end{abstract}

\subjclass[2010]{Primary 11F68, 11M41; Secondary 20F55}

\keywords{Weyl group multiple Dirichlet series, stable case, metaplectic}

\maketitle

\section{Introduction}
\label{sec:introduction}
Given a positive integer $n$, a global field $K$ containing all $2n$th roots of unity, and an irreducible, reduced root system $\Phi$ of rank $r$, Weyl group multiple Dirichlet series are Dirichlet series in $r$ complex variables, with analytic continuation to $\C^r$ and a group of functional equations isomorphic to the Weyl group of $\Phi$. Such series have applications in analytic number theory and arise, for example, in the study of moments of $L$-functions \cite{GH}.

We will be concerned with a class of Weyl group multiple Dirichlet series that arise as Fourier-Whittaker coefficients of metaplectic Eisenstein series (Eisenstein series on covers of reductive groups). Because the coefficients involve Gauss sums, in general, such series are {\em not} Eulerian. However, they do satisfy a twisted analogue of an Euler product:~each prime $p$ of $K$ corresponds to a local factor, and like classical Euler factors, the local $p$-parts are generating functions for the $p$-power coefficients of the series. In particular, the $p$-parts completely determine the global series.

Due to the complexity inherent in computing on metaplectic groups, significant effort has been devoted to the combinatorial problem of how to define $p$-parts in a way such that the resulting series coincide with those coming from Eisenstein series. At present, there are two main approaches for which the resulting series have been shown to yield the desired analytic properties of meromorphic continuation and Weyl group of functional equations. The first of Brubaker, Bump, and Friedberg defines the $p$-parts term by term via Gelfand-Tsetlin patterns, crystal graphs, or related combinatorial devices. This approach produces the desired analytic properties for all $\Phi$ when $n$ is sufficiently large \cite{wmds2,wmds4}, for all $n$ when $\Phi=A_r$ \cite{typea}, and in various other cases for classical types, e.g.~\cite{FZ}. The second of Chinta and Gunnells defines the $p$-parts all at once with an averaging technique analogous to the Weyl character formula. This approach produces a global object with the desired analytic properties for all $\Phi$ and all $n$ \cite{wmnds}. 

In this paper we study the structure of $p$-parts of Weyl group multiple Dirichlet series defined using the Chinta--Gunnells technique \cite{wmnds}. Our main goal is to show that the methods of \cite{wmds2,wmds4} and \cite{wmnds} give rise to the same series, in the situation where both are applicable. For $\Phi=A_r$ and $n=1$ the result follows from Tokuyama's formula \cite{Tokuyama}, which expresses a deformation of a Weyl character as a sum over a crystal graph. For $\Phi=A_r$ and general $n$, the result follows from the the combined works of Chinta and Offen \cite{chintaoffen} and McNamara \cite{mcnamara}, who compare the two methods via local Whittaker functions. For $\Phi$ simply laced and $n=2$, Chinta, Friedberg, and Gunnells \cite{ppart} provide further evidence by showing that the stable (see Section \ref{sec:stable}) coefficients of the local $p$-parts agree. We extend the results of \cite{ppart} to all $\Phi$ and $n$.

Our main tool to understand the structure of the $p$-parts is an explicit set of recurrence relations on the coefficients, which we state in Theorem \ref{thm:rec}. These relations allow us to prove Theorem \ref{thm:support}, which extends \cite[Theorem 3.2]{ppart} on the support of the $p$-parts to all $\Phi$ and all $n$. With Theorem \ref{thm:support}, we compare the stable coefficients of \cite{wmds4,wmds2} and \cite{wmnds}; Theorem \ref{thm:stable} extends \cite[Theorem 4.1]{ppart} and shows they do indeed agree.  Finally, Theorem \ref{thm:unstab} addresses the extent to which the recurrence relations uniquely determine the $p$-parts.

In addition to analyzing the $p$-parts, we explore the structure of modified $p$-parts (obtained by multiplying the $p$-parts with a rational factor) that have applications to the case when $K$ is a function field over a finite field. When $K$ is the rational function field, up to a variable change the modified $p$-parts coincide with the global Weyl group multiple Dirichlet series. This phenomenon was first noticed in \cite{C,mohler} and mirrors the similarity between the zeta function of the rational function field and its Euler factors.  As a complement to Theorem \ref{thm:support}, Theorem \ref{thm:gap} describes the support of these modified $p$-parts.  Just as in the case of the zeta function of a function field, we expect the associated multiple Dirichlet series to encode much information about the arithmetic of the defining curve. The first step is to understand the support of the series, which the author will pursue in a forthcoming paper. See also \cite{mydis}.

This paper proceeds as follows. In Section \ref{sec:prelims}, we review the Chinta--Gunnells construction of the $p$-parts \cite{wmnds}. In Section \ref{sec:rec}, we derive an explicit set of recurrence relations on the coefficients of the $p$-parts, allowing us to prove Theorem \ref{thm:support} on the support of the $p$-parts. At the end of Section \ref{sec:rec} we also prove Theorem \ref{thm:gap} on the support of the modified $p$-parts. Section \ref{sec:stable} compares the stable coefficients of \cite{wmds2,wmds4} and \cite{wmnds}, and Section \ref{sec:unstable} details what is known about the unstable coefficients and the extent to which the $p$-parts are uniquely determined.

\section{Preliminaries}
\label{sec:prelims}
The $p$-parts of Weyl group multiple Dirichlet series are built from combinatorial and number theoretic data. In this section, we review essential definitions related to root systems and Gauss sums and then describe the construction of the $p$-parts. 

\subsection{Root systems}
Let $\Phi$ be an irreducible, reduced root system of rank $r$ and $\{\alpha_1,\ldots, \alpha_r\}$ the simple roots. The root lattice $\Lambda$ of $\Phi$ is the $\Z$-span of the simple roots. For $\alpha=\sum k_i\alpha_i\in \Lambda$, define the generalized height function $d:\Lambda \to \Z$ as
\[
d:\alpha \mapsto \sum k_i.
\]

Denote by $W$ the Weyl group of $\Phi$, and choose a $W$-invariant symmetric bilinear inner product $(\cdot,\cdot)$ on $\Lambda \otimes \R$ normalized so that all short roots have length one\footnote{Our choice of inner product is different from that of our main reference \cite{ppart}, which follows the standard Bourbaki convention such that the short roots have length 2, except in the case when $\Phi=B_r$, in which the short roots are taken to have length 1.}. This normalization implies that for any $\alpha,\beta \in \Lambda$, we have $(\alpha,\beta) \in \frac{1}{2}\Z$.  For $\alpha,\beta \in \Lambda$, further define $\lang\alpha,\beta\rang=2\frac{(\alpha,\beta)}{(\beta,\beta)}$. Then the simple reflections 
\begin{equation}
\label{eqn:sigj}
\sigma_{\alpha_j}(\alpha_i)=\sigma_j(\alpha_i):=\alpha_i-\lang\alpha_i,\alpha_j\rang\alpha_j,
\end{equation}
generate $W$ and the Cartan matrix $(c_{ij})$ of $\Phi$ is defined by $c_{ij}:=\lang\alpha_i,\alpha_j\rang$.

Define the simple coroots $\check{\alpha}_i=2\alpha_i/(\alpha_i,\alpha_i)$ for $i=1,\ldots, r$. The dual lattice to $\Lambda$ with respect to the coroot basis is the weight lattice $L$ of $\Phi$ whose basis is given by the fundamental weights $\{\varpi_1,\ldots,\varpi_r\}$ defined by $(\varpi_i,\check{\alpha}_j)=\delta_{ij}$. There is a partial order on the weights: for $\lambda, \mu \in L$, we say $\lambda \succeq \mu$ if $\lambda-\mu=\sum k_i\varpi_i$ with all $k_i$ nonnegative. The weight $\lambda \in L$ is said to be dominant if $\lang \lambda,\alpha_i\rang \geq 0$ for all $i=1,\ldots, r$ and strongly dominant if the inequality is strict. For instance $\rho:=\sum_{i} \varpi_i$ is strongly dominant.

\subsection{Gauss sums}
Let $n$ be a positive integer and $K$ a global field containing all $2n$th roots of unity. Denote the ring of integers of $K$ by $\mathcal{O}_K$. Let $p$ be a prime of $K$ of norm $|p|$. For any $c \in \mathcal{O}_K$ and $t \in \Z$, one may associate a Gauss sum $g_t(c,p) \in \C$ modulo $p$. Gauss sums have a rich arithmetic structure, but here we will only require that they are complex numbers that satisfy the relations below. The interested reader may consult \cite[Section 2]{Kubota} for precise definitions. In particular we note that the Gauss sums we describe are generalizations of traditional finite field Gauss sums, whose properties are detailed in, for example, \cite[Chapter 8]{irelandrosen}. We have

\begin{align}
\label{eqn:gauss}
g_t(p^k,p^l)&=\left\{\begin{array}{cl}|p|^kg_{tl}(1,p)&\mbox{if } l=k+1;\\
\phi(p^l)&\mbox{if } n|tl \mbox{ and } k \geq l;\\
0&\mbox{otherwise}.\end{array}\right.
\end{align}
where $\phi(p^l)$ is the size of the residue field associated with $p^l$. Further, define 
\begin{equation}
\label{eqn:gauss3}
g_t(1,p)=-1\hspace{1cm}\mbox{if $n$ divides $t$.}
\end{equation}
Finally, if $t$ does not vanish modulo $n$, then 
\begin{equation}\label{eqn:gauss2}
g_t(1,p)g_{-t}(1,p)=|p|.
\end{equation}
As we will most often deal with the situation when $c=1$, we denote $g_t(1,p)$ by $g_t(p)$.

\subsection{Construction of $p$-parts}
\label{sec:cg}

We now review the Chinta--Gunnells construction \cite{wmnds} of $p$-parts of Weyl group multiple Dirichlet series. The idea is to use averaging in a way analogous to the Weyl character formula to define a rational function supported on $\Lambda$ that is invariant under a certain Weyl group action. We first define this action and then explain the relationship between the $p$-parts and the invariant function.
c
Fix an $r$-tuple of nonnegative integers $\ell=(l_1,\ldots,l_r)$. The tuple $\ell$ is a {\em twisting parameter} that determines a strongly dominant weight
\(
\theta=\theta(\ell):=\sum_{i=1}^r (l_i+1)\varpi_i
\)
(also referred to as the twisting parameter) and a $W$-action on $\Lambda$:
\begin{equation}
\label{eqn:bullet}
w\bullet\lambda=w(\lambda-\theta)+\theta.
\end{equation}
In particular, when $w=\sigma_j$ is a simple reflection, we have 
\(
\sigma_j\bullet\lambda=\sigma_j\lambda+(l_j+1)\alpha_j.
\)

Consider $A=\C[\Lambda]$, the ring of Laurent polynomials on $\Lambda$. The ring $A$ consists of all expressions $f$ of the form $f=\sum_{\beta \in \Lambda} c_\beta \x^\beta$ with $c_\beta \in \C$ almost all zero. Multiplication in $A$ is defined by addition in $\Lambda$: $\x^\beta\x^\lambda=\x^{\lambda+\beta}$ and we identify $A$ with $\C[x_1,x_1^{-1}\ldots,x_r,x_r^{-1}]$ via $\x^{\alpha_i}\mapsto x_i$. 

Our goal is to define a Weyl group action on the field of fractions $\tilde{A}$ of $A$. First, define a change of variables action on $A$ by 
\[
\left(\sigma_j(\x)\right)_i=|p|^{-c_{ij}}x_ix_j^{-c_{ij}}.
\]
This action is essentially a reformulation of the standard action of $W$ on $\Lambda$. One can check that if $f_\beta(\x)=\x^\beta$ is a monomial, then
\(
f_\beta(w\x)=|p|^{d(w^{-1}\beta-\beta)}\x^{w^{-1}\beta}.
\)

For $\alpha \in \Phi$, let
\(
n(\alpha)=n/\gcd(n,\|\alpha\|^2)\), and consider the sublattice $\Lambda'\subset \Lambda$ generated by the set $\{n(\alpha)\alpha\}_{\alpha \in \Phi}$. Define $\tilde{A}_\lambda$ as the set of functions $f/g \in \tilde{A}$ such that the support of $g$ lies in the kernel of the map $\nu:\Lambda \to \Lambda/\Lambda'$ and $\nu$ maps the support of $f$ to $\lambda$. Then we have the decomposition
\[
\tilde{A}=\bigoplus_{\lambda \in \Lambda/\Lambda'}\tilde{A}_\lambda.
\]

We are now ready to define the Chinta--Gunnells action. Fix $k \in \{1,\ldots, r\}$. For $\lambda \in \Lambda$, define \(\delta_{\ell,k}=\delta_k(\lambda):=d(\sigma_k\bullet\lambda-\lambda)\). Let $g^\ast_t(p)$ be the normalized Gauss sum
    \begin{equation}\label{gdef}
    g^\ast_t(p)=\left\{\begin{array}{ll}-1 &\mbox{ if } t\equiv 0\mod{n},\\
    g_t(p)/|p| &\mbox{ otherwise.}\end{array}\right.\end{equation}
For positive integers $a$ and $m$, let $(a)_m:=a-m\lfloor a/m\rfloor$ be the remainder of $a$ upon division by $m$. Also define $(-a)_m=0$ if $(a)_m=0$ and $(-a)_m=m-(a)_m$ otherwise.  Define the following rational functions:
    \begin{align}
    \label{eqn:locPQ}
    P_{\beta,\ell,k}(x_k)&=(|p| x_k)^{l_k+1-(\delta_k(\beta))_{n(\alpha_k)}}\frac{1-1/|p|}{1-(|p| x_k)^{n(\alpha_k)}/|p|},\\
    \label{eqn:locQ}
    Q_{\beta,\ell,k}(x_k)&=-g^\ast_{-\|\alpha_k\|^2\delta_k(\beta)}(p)(|p| x_k)^{l_k+1-{n(\alpha_k)}}\frac{1-(|p| x_k)^{n(\alpha_k)}}{1-(|p| x_k)^{n(\alpha_k)}/|p|}.\nonumber
    \end{align}
\noindent Then the simple reflection $\sigma_k\in W$ acts on $f(\x)\in A_\beta$ by
\begin{equation}
\label{eqn:act}
(f|_\ell \sigma_k)(\x)=(P_{\beta,\ell,k}(x_k)+Q_{\sigma\bullet\beta,\ell,k}(x_k))f(\sigma_k\x)
\end{equation}
and this action respects the defining relations for $W$ \cite[Theorem 3.2]{wmnds}.

Finally, we average this action applied to $f(\x)=1$ over the Weyl group to obtain the invariant function whose numerator yields the $p$-parts. First for $w \in W$, let $l(w)$ be the number of $\sigma_j$ in any reduced expression and put $sgn(w)=(-1)^{l(w)}$. Then define polynomials $j(w,\x)=sgn(w)\prod_{\alpha \in \Phi(w)}|p|^{n(\alpha)d(\alpha)}\x^{n(\alpha)\alpha}$ and $\Delta(\x)=\prod_{\alpha>0} (1-|p|^{n(\alpha)d(\alpha)}\x^{n(\alpha)\alpha})$.  The invariant function we seek is 
\[
F(\x;\ell):=\frac{1}{\Delta(x)}\sum_{w \in W} j(w,\x)(1|_{\ell}w)(\x).
\]
In particular, for $D(\x)=\prod_{\alpha>0} (1-|p|^{n(\alpha)d(\alpha)-1}\x^{n(\alpha)\alpha})$ the product $N(\x;\ell):=F(\x;\ell)D(\x)$ is polynomial in the $x_i$ \cite[Theorem 3.5]{wmnds}. For each $\ell$, the $p$-part is the polynomial $N(\x;\ell)$. For details on how the $p$-parts determine the global series coefficients, see \cite[Section 4]{wmnds}.

\begin{rem*}We think of the $p$-parts as ``metaplectic'' symmetric functions. Indeed when $n=1$, we recover a deformation of the Weyl character formula from $N(\x;\ell)=F(\x;\ell)D(\x)$. \end{rem*}

\section{The support of $N(\x;\ell)$}
\label{sec:rec}
The main goal of this section is to describe the support of the $p$-parts $N(\x;\ell)$. The key tool will be an explicit set of recurrence relations on the coefficients of $N(\x;\ell)$.
\subsection{A recurrence relation}
The invariance of $F(\x;\ell)$ under the $W$-action \eqref{eqn:act} induces a recurrence relation on the coefficients of $N(\x;\ell)$. This recurrence relation is summarized in the following theorem, which extends \cite[Theorem 3.1]{ppart} from $\Phi$ simply laced and $n=2$ to all $\Phi$ and $n$.
\begin{thm}
\label{thm:rec}
Let $N(\x;\ell)=\sum a_\lambda \x^\lambda$ and fix a simple reflection $\sigma=\sigma_k\in W$. Let $\mu=\sigma_k\bullet\lambda$, $\alpha=\alpha_k$, $m=n(\alpha_k)$, $\delta=\delta_k(\lambda)$, and $\nu=m-(\delta)_m$. Then, if $\delta\equiv 0\pmod m$, we have
\begin{align}
\label{eqn:rec1}
-|p|^{1+m}a_{\lambda-m\alpha}+a_\lambda&=-|p|^{1-\delta}\left(a_\mu-|p|^{-(1+m)}a_{\mu+m\alpha}\right).\\
\intertext{Otherwise, }
\label{eqn:rec2}
{g^*_{-\|\alpha\|^2\delta}(p)}|p|^{1+\nu}a_{\lambda-\nu\alpha}+a_\lambda&={g^*_{-\|\alpha\|^2\delta}(p)}|p|^{1-\delta}\left(a_\mu+(g^*_{-\|\alpha\|^2\delta}(p))^{-1}|p|^{-(1+\nu)}a_{\mu+\nu\alpha}\right).
\end{align}
\end{thm}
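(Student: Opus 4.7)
The plan is to derive the recurrence from the $\sigma_k$-invariance of $F(\x;\ell)$, following and generalizing the strategy of \cite[Theorem 3.1]{ppart} from the simply laced $n=2$ case to arbitrary $\Phi$ and $n$. Since $F(\x;\ell) = N(\x;\ell)/D(\x)$ is fixed by the action \eqref{eqn:act}, one has the rational identity
\[
\frac{N(\x;\ell)}{D(\x)} \;=\; \bigl(P_{\beta,\ell,k}(x_k) + Q_{\sigma_k \bullet \beta, \ell, k}(x_k)\bigr)\,\frac{N(\sigma_k \x;\ell)}{D(\sigma_k \x)}
\]
on each isotypic component $\tilde{A}_\beta$. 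Clearing denominators and matching coefficients of a chosen monomial $\x^\lambda$ should then produce the recurrence.

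A key simplification is that the common denominator of $P$ and $Q$ in $x_k$ is $1 - |p|^{m-1}x_k^m$, which is exactly the $\alpha_k$-factor of $D(\x)$ since $d(\alpha_k)=1$ and $m=n(\alpha_k)$. Because $\sigma_k$ permutes the positive roots other than $\alpha_k$ among themselves, and because the change of variables $\x^\beta \mapsto |p|^{d(\sigma_k\beta-\beta)}\x^{\sigma_k\beta}$ sends the $\alpha$-factor of $D(\x)$ to the $\sigma_k\alpha$-factor of $D(\sigma_k \x)$ for each such $\alpha$, the only nontrivial contribution to the ratio $D(\x)/D(\sigma_k \x)$ comes from the single $\alpha_k$-factor. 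Multiplying the displayed identity through by an appropriate symmetric denominator therefore collapses everything into a polynomial identity between $N(\x;\ell)$ and $N(\sigma_k \x;\ell)$ whose nontrivial content lives entirely in the $x_k$-direction. I would then fix a coset of $\Lambda$ modulo $\Z\alpha_k$ and read off the coefficient of a specific $\x^\lambda$; the twist $\sigma_k\lambda\mapsto \mu=\sigma_k\bullet\lambda=\sigma_k\lambda+(l_k+1)\alpha_k$ from the definitions of $P$ and $Q$ accounts for the shift appearing in the recurrence, while the numerators of $P$ and $Q$ are supported on a narrow range of exponents in $x_k$ controlled by $(\delta)_m$, $l_k+1$, and $m$, so on each $\alpha_k$-line the identity truncates to a two-term relation involving exactly $a_\lambda$, $a_{\lambda-\nu\alpha}$, $a_\mu$, and $a_{\mu+\nu\alpha}$. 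The case split is governed by whether $m\mid \delta$: if so, $\nu=m$ and $g^*_{-\|\alpha\|^2\delta}(p)=-1$ by the top case of \eqref{gdef}, producing \eqref{eqn:rec1}; otherwise $0<\nu<m$ and $g^*$ is a genuine normalized Gauss sum, producing \eqref{eqn:rec2}.

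The main obstacle will be the bookkeeping in extending beyond the simply laced $n=2$ setting. Several features become more delicate for general $\Phi$ and $n$: the exponent $m = n(\alpha_k) = n/\gcd(n,\|\alpha_k\|^2)$ depends on whether $\alpha_k$ is long or short, the Gauss sum indices acquire an extra factor of $\|\alpha_k\|^2$, and the change of variables $\sigma_k(\x)_i = |p|^{-c_{ik}}x_i x_k^{-c_{ik}}$ involves Cartan integers that are no longer all $0, \pm 1$. Tracking the powers of $|p|$, signs, and Gauss sum subscripts through these generalizations, and verifying that the numerator support on the $\alpha_k$-line collapses to exactly the stated two-term relations, is the most delicate algebraic part of the argument; the nonvanishing relation \eqref{eqn:gauss2} is what ensures the collapse in the second case.
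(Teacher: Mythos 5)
Your overall strategy is on the right track: you correctly identify that the invariance $(F|_\ell\sigma_k)=F$ gives $N(\x;\ell) = \tfrac{D(\x)}{D(\sigma_k\x)}(N|_\ell\sigma_k)(\x;\ell)$, that the $\alpha_k$-factor of $D$ cancels the denominator of $P+Q$, and that the case split and Gauss sum normalization are governed by $(\delta)_m$ via \eqref{gdef} and \eqref{eqn:gauss2}. This is the same starting point as the paper.

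However, there is a genuine gap at the step where you claim that, after clearing denominators, reading off the $\x^\lambda$-coefficient along a single $\alpha_k$-line ``truncates to a two-term relation involving exactly $a_\lambda$, $a_{\lambda-\nu\alpha}$, $a_\mu$, and $a_{\mu+\nu\alpha}$.'' It does not. Once you multiply through by the surviving factor $|p|^{m+1}\x^{m\alpha}-1$, the left side $N(\x;\ell)(|p|^{m+1}\x^{m\alpha}-1)$ contributes \emph{two} terms, $a_\lambda$ and $a_{\lambda-m\alpha}$, while the right side contributes \emph{three}: one from the numerator of $P$ (giving $a_{\mu+\nu\alpha}$) and two from the factor $(1-(|p|x_k)^m)$ in the numerator of $Q$ (giving $a_\mu$ and $a_{\mu+m\alpha}$). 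So a single coefficient extraction gives a five-term relation in $a_\lambda,\ a_{\lambda-m\alpha},\ a_{\mu+\nu\alpha},\ a_{\mu+m\alpha},\ a_\mu$ — and the term $a_{\lambda-\nu\alpha}$ you list does not appear at all, while $a_{\lambda-m\alpha}$ and $a_{\mu+m\alpha}$, which you do not list, do. The paper resolves this by extracting coefficients a \emph{second} time, at the monomial $\x^{\mu+m\alpha}$ (whose reflection under $\sigma_k\bullet$ is $\lambda-m\alpha$), producing a second five-term relation; when the two are added with the right normalization, the unwanted $a_{\lambda-m\alpha}$ and $a_{\mu+m\alpha}$ terms cancel and the $a_{\lambda-\nu\alpha}$ term is introduced, yielding the stated four-term recurrence. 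This combination of two extractions is the essential idea missing from your proposal — it is not mere bookkeeping, and without it the claimed truncation to the four coefficients in the theorem does not hold.
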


\begin{proof}
Put $l=l_k$. Let $f \in \tilde{A}$ and $g(\x) \in \tilde{A}_\beta$, with $\beta \in \Lambda'$. For all $w \in W$, we have \cite[Lemma 3.4]{wmnds}  
\begin{equation}
\label{eqn:actprod}
(fg|_\ell w)(\x)=g(w\x)(f|_\ell w)(\x).
 \end{equation}  
It follows from $D(\x) \in \tilde{A}_0$ that \((F|_\ell \sigma)(\x;\ell)=(N |_\ell\sigma)(\x;\ell)/D(\sigma \x)\). The $W$-invariance of $F(\x;\ell)$ yields 
\begin{equation}
\label{eqn:recact}
N(\x;\ell)=\frac{D(\x)}{D(\sigma\x)}(N|_\ell\sigma)(\x;\ell)
\end{equation}
and one checks that
\begin{equation}
\label{eqn:dsigma}
\frac{D(\x)}{D(\sigma\x)}=\frac{|p|^{m+1}\x^{m\alpha}(1-|p|^{m-1}\x^{m\alpha})}{|p|^{m+1}\x^{m\alpha}-1}.
\end{equation}

For $\lambda \in \Lambda$, we calculate the $\x^\lambda$ coefficient in \eqref{eqn:act}. To isolate the terms on the right-hand side that contribute, we define new functions
\begin{align*}
P_{\lambda}(x)&=(1-1/|p|)(|p| x)^{l+1-(\delta(\lambda))_m}\\
Q_{\lambda}(x)&=-g^*_{-\|\alpha\|^2\delta(\lambda)}(p)(|p| x)^{l+1-m}\\
R_\lambda(x)&=g^*_{-\|\alpha\|^2\delta(\lambda)}(p)(|p| x)^{l+1}.
\end{align*}
Substituting \eqref{eqn:dsigma} into \eqref{eqn:recact} using \eqref{eqn:act}, we obtain 
\begin{equation}
\label{eqn:termrec}
\sum (a_{\lambda-m\alpha}|p|^{m+1}-a_\lambda)\x^\lambda=\sum a_\lambda\left[P_\lambda(x)+Q_\lambda(x)+R_\lambda(x)\right]|p|^{m-l+\delta(\lambda)}\x^{m\alpha+\sigma\lambda},
\end{equation}
where we have used that the change of variables action under $\sigma$ takes $\x^\lambda$ to $|p|^{d(\sigma\lambda-\lambda)}\x^{\sigma\lambda}$ and that $d(\sigma\lambda-\lambda)=\delta(\lambda)-l-1$.

A straightforward calculation shows that the terms on the right-hand side that contribute to the $\x^\lambda$ coefficient are 
\begin{align*}
a_\gamma P_\gamma(x)|p|^{m-l+\delta_k(\gamma)}\x^{m\alpha+\sigma\gamma}, \mbox{ with } \gamma&=\sigma \bullet \lambda +\nu\alpha\\
a_\gamma Q_\gamma(x)|p|^{m-l+\delta_k(\gamma)}\x^{m\alpha+\sigma\gamma}, \mbox{ with } \gamma&=\sigma \bullet \lambda\\
a_\gamma R_\gamma(x)|p|^{m-l+\delta_k(\gamma)}\x^{m\alpha+\sigma\gamma}, \mbox{ with } \gamma&=\sigma \bullet \lambda + m\alpha.
\end{align*}

For convenience, we show the computation for the $P_\gamma$ term in the case $(\delta(\lambda))_m\neq0$. For any $\gamma\in \Lambda$, the monomial contribution from $P_\gamma(x)\x^{m\alpha+\sigma\gamma}$ is $\x^{\sigma \bullet \gamma+(m-(\delta(\gamma))_m)\alpha}$. We need only check that the exponent is $\lambda$ when $\gamma=\sigma \bullet \lambda + \nu\alpha$. This requires simplifying
\(
[\sigma\bullet(\sigma\bullet\lambda+\nu\alpha)]+[(m-(\delta(\sigma\bullet\lambda+\nu\alpha))_m) \alpha]
\).
The first term is $\lambda-\nu\alpha$. Thus it suffices to show that the second term is $\nu\alpha$ or equivalently that
\(
(\delta(\sigma\bullet\lambda+\nu\alpha))_m=(\delta(\lambda))_m.
\)
We have $\nu=(\delta(\sigma\bullet\lambda))_m$. It follows that \(\sigma\bullet\lambda+\nu\alpha=\sigma\bullet\lambda+(\delta(\sigma\bullet\lambda))_m\alpha\), and
\begin{align*}
(\delta(\sigma\bullet\lambda+(\delta(\sigma\bullet\lambda))_m\alpha))_m
&=(d(\lambda-\sigma\bullet\lambda-2(\delta(\sigma\bullet\lambda))_m\alpha))_m\\
&=(\delta(\sigma\bullet\lambda)-2(\delta(\sigma\bullet\lambda))_m)_m\\
&=(-\delta(\lambda)-2(m-(\delta(\lambda))_m))_m\\
&=(\delta(\lambda))_m.
\end{align*}

Checking the contributions for $Q_\gamma$ and $R_\gamma$ is similar. For the $Q_\gamma$ term, one must show that $l+1+\sigma\gamma=\lambda$ when $\gamma=\sigma\bullet\lambda$. For the $R_\gamma$ term, we need that $(m+l+1)\alpha+\sigma\gamma=\lambda$ when $\gamma=\sigma\bullet\lambda+m\alpha$.  These statements are both clear from definition \eqref{eqn:bullet}.

Collecting the $\x^\lambda$ coefficients and moving all terms of \eqref{eqn:termrec} to the right-hand side, we obtain a five term recurrence relation:
\begin{equation}
\label{eqn:5rec1}
\begin{array}{lll}
0&=&a_\lambda -|p|^{m+1}a_{\lambda-m\alpha}-(1-1/|p|)|p|^{1+m+\delta(\gamma)-\delta(\gamma)_m}a_{\mu+\nu\alpha}\\
&&+\,g^*_{-\|\alpha\|^2\delta(\lambda)}(p)|p|^{1-\delta(\lambda)-m}a_{\mu+m\alpha}-g^*_{-\|\alpha\|^2\delta(\lambda)}(p)|p|^{1-\delta(\lambda)}a_\mu,
\end{array}
\end{equation}
where in \eqref{eqn:5rec1} we put $\gamma=\mu+\nu\alpha$. We note that \eqref{eqn:5rec1} is a generalization of a relation in \cite{mohler}, which applied to the case $\Phi=A_r$ and $n \gg r$.

We next apply \eqref{eqn:termrec} a second time, now with $\x^{\mu+m\alpha}$ as the monomial on the left-hand side. First, we calculate the contributions to the $\x^{\mu+m\alpha}$ coefficient on the right-hand side. We have
\begin{align*}
a_\gamma P_\gamma(x)|p|^{m-l+\delta_k(\gamma)}\x^{m\alpha+\sigma\gamma}, \mbox{ with }\gamma&=\lambda - \nu\alpha\\
a_\gamma Q_\gamma(x)|p|^{m-l+\delta_k(\gamma)}\x^{m\alpha+\sigma\gamma}, \mbox{ with }\gamma&=\lambda-m\alpha\\
a_\gamma R_\gamma(x)|p|^{m-l+\delta_k(\gamma)}\x^{m\alpha+\sigma\gamma}, \mbox{ with }\gamma&=\lambda.
\end{align*}
Again, when we collect the coefficients, now moving all terms to the left-hand side, we obtain a second five-term recurrence relation: 

\begin{equation}
\label{eqn:5rec2}
\begin{array}{rrr}
g^*_{-\|\alpha\|^2\delta(\lambda)}(p)|p|^{2-\delta(\lambda)}a_\mu-g^*_{-\|\alpha\|^2\delta(\lambda)}(p)|p|^{1-\delta(\lambda)-m}a_{\mu+m\alpha}+|p|^{1+m}a_{\lambda- m\alpha}&&\\
-|p|a_\lambda-(1-1/|p|)g^*_{-\|\alpha\|^2\delta(\lambda)}(p)|p|^{2-\delta(\lambda)+\delta(\gamma)-\delta(\gamma)_m}a_{\lambda-\nu\alpha}&=&0,
\end{array}
\end{equation}
where in \eqref{eqn:5rec2} we put $\gamma=\lambda-\nu\alpha$. Note that in comparison with \eqref{eqn:5rec1}, we have multiplied each term by $g^*_{-\|\alpha\|^2\delta(\lambda)}(p)|p|^{1-\delta(\lambda)-m}$. 

Adding \eqref{eqn:5rec1} and \eqref{eqn:5rec2} and simplifying, we obtain the $m$ recurrence relations \eqref{eqn:rec1} and \eqref{eqn:rec2} stated in the theorem. For $(\delta(\lambda))_m \neq 0$, we note that when $\gamma=\lambda-\nu\alpha$, we have
\(
1-\delta(\lambda)+\delta(\gamma)-(\delta(\gamma))_m=1+m-(\delta(\lambda))_m
.\)
Similarly,  when $\gamma=\mu+\nu\alpha$, we have
\(
m+\delta(\gamma)-(\delta(\gamma))_m=(\delta(\lambda))_m-\delta(\lambda)-m.
\)
\end{proof}

\subsection{Support of $p$-parts}
We are now able to describe the support of $N(\x;\ell)$. In particular, we show that $N(\x;\ell)$ is supported on a shifted weight polytope for a lowest weight representation. 

Recall that $\theta=\sum_{i=1}^r (l_i+1)\varpi_i$.  Let $\Pi_\theta$ be the convex hull of the points $\theta-w\theta$ for $w \in W$. More precisely, $\Pi_\theta$ is the weight polytope for the irreducible representation of {\em lowest} weight $-\theta$, shifted by $\theta$. If $\Theta$ is the set of dominant weights in the representation of {\em highest} weight $\theta$, then all points of $\Pi_\theta$ have the form $\theta-w\xi$ where $w \in W$ and $\xi \in \Theta$. 

Our result is that the only nonzero coefficients of $N(\x;\ell)$ are those associated to the points in $\Pi_\theta$. The following theorem extends \cite[Theorem 3.2]{ppart} to all $\Phi$ and all $n$.

\begin{thm}
\label{thm:support}
The support of $N(\x;\ell)$ is contained in $\Pi_\theta$.
\end{thm}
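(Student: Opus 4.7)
The plan is to combine the polynomial property of $N(\x;\ell)$ with a systematic application of the recurrence of Theorem~\ref{thm:rec}. By \cite[Theorem 3.5]{wmnds} the Laurent polynomial $N(\x;\ell)$ is in fact a polynomial in $x_1,\ldots,x_r$, so its support automatically lies in the positive root cone $\Lambda_+:=\sum_i\N\alpha_i$. Identifying this with the inclusion $\supp N(\x;\ell)\subseteq w\bullet\Lambda_+$ for $w=e$ (since $e\bullet\Lambda_+=\Lambda_+$), the plan is first to upgrade this to all $w\in W$, and then to identify $\bigcap_{w\in W}w\bullet\Lambda_+$ with $\Pi_\theta$.

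I would prove the upgrade by induction on $l(w)$. Suppose the inclusion $\supp N(\x;\ell)\subseteq w'\bullet\Lambda_+$ is known, and let $\sigma_kw'$ be longer than $w'$. Since $\sigma_k\bullet$ is an involution on $\Lambda$ and satisfies $(\sigma_k\bullet)\circ(w'\bullet)=(\sigma_kw')\bullet$, the condition $\lambda\notin\sigma_kw'\bullet\Lambda_+$ is equivalent to $\sigma_k\bullet\lambda\notin w'\bullet\Lambda_+$. Fix such a $\lambda$ and consider the $\sigma_k$-string $S=\{\lambda+j\alpha_k:j\in\Z\}$. A direct check shows that $w'\bullet\Lambda_+\cap S$ is either empty (in which case $a_\lambda=0$ is already forced by the inductive hypothesis once $\lambda\in S$ itself has $a_\lambda=0$ at the outset of the argument) or a half-line, and that $\sigma_k\bullet$ restricted to $S$ is the reflection $j\mapsto\delta-j$, so it sends $w'\bullet\Lambda_+\cap S$ bijectively to $\sigma_kw'\bullet\Lambda_+\cap S$. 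The key observation is that the four coefficients appearing in each of \eqref{eqn:rec1} and \eqref{eqn:rec2} lie on $S$ at positions symmetric under precisely this reflection. Starting at the boundary of the half-line $w'\bullet\Lambda_+\cap S$ and iterating outward, the recurrence reduces at each step to a two-term relation (three of the four coefficients vanishing by the inductive hypothesis), which drives the vanishing along the reflected half-line until $a_\lambda=0$ is reached.

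To identify the intersection, observe that each $w\bullet\Lambda_+$ is a simplicial cone with apex $w\bullet 0=\theta-w\theta$ (a vertex of $\Pi_\theta$) and edge directions $w\alpha_1,\ldots,w\alpha_r$. These are precisely the outer vertex cones of the $W$-orbit polytope $\Pi_\theta$, and a convex polytope always equals the intersection of its outer vertex cones; hence $\bigcap_{w\in W}w\bullet\Lambda_+=\Pi_\theta$, completing the argument.

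The main obstacle will be the inductive propagation. Each of \eqref{eqn:rec1} and \eqref{eqn:rec2} is a \emph{four-term} relation, so I need to order the coefficients along $S$ so that at each step three of the four terms are already known to be zero by the inductive hypothesis. The cases $\delta\equiv 0\pmod m$ and $\delta\not\equiv 0\pmod m$ are governed by different relations and must be handled separately. I also expect to need special care in degenerate situations, for example when the $\sigma_k$-string meets the boundary of $w'\bullet\Lambda_+$ in only a single lattice point, or when the step size ($m$ or $\nu$) is larger than the length of the half-line, so that the propagation cannot begin cleanly from the boundary and one must seed the induction differently.
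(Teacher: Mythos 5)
Your overall architecture matches the paper's: induct on $l(w)$, pass from a simple reflection $\sigma_j$ to the recurrence of Theorem~\ref{thm:rec} along the corresponding root string, and force a vanishing coefficient by contradiction. Your packaging via cone inclusions $\supp N(\x;\ell)\subseteq w\bullet\Lambda_+$ is cleaner than the paper's — it lets you bypass the redundancy reduction (\cite[Lemma 3.4]{ppart}, quoted as Lemma~\ref{lem:ineq}) and avoids juggling separate indices $j$ (for the string direction) and $k$ (for the violated inequality) as the paper does — and the identification $\bigcap_{w}w\bullet\Lambda_+=\Pi_\theta$ is correct, since the $r$ inequalities in \eqref{eqn:ineq} attached to a fixed $w$ cut out exactly the lattice cone $w\bullet\Lambda_+$.

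There is, however, one step in your plan that does not work as stated: ``starting at the boundary of the half-line $w'\bullet\Lambda_+\cap S$ and iterating outward, the recurrence reduces at each step to a two-term relation (three of the four coefficients vanishing by the inductive hypothesis).'' If the half-line is $\{j\ge j_0\}$ and you apply the recurrence at a base position $j<j_0$, the two positions $j,j-s$ vanish by the inductive hypothesis, but the reflected positions $\delta-j$ and $\delta-j+s$ are both on the unknown side, so you get a relation in \emph{two} unknowns, not one, and the propagation never starts. The correct seed is at the opposite end: since $N(\x;\ell)$ is a polynomial (hence has finite support), the half-line $w'\bullet\Lambda_+\cap S$ contains a final point of support, and it is the recurrence applied with \emph{that} extremal coefficient (i.e.\ with $\mu=\sigma_k\bullet\lambda$ placed as the outermost nonvanishing term on the ray) that makes three of the four terms vanish — two by the inductive hypothesis and one by extremality/polynomiality, not all three by the inductive hypothesis. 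This is precisely how the paper argues, via Lemma~\ref{lem:ray} and the choice of $\mu$ as ``the final point of support on the ray.'' Your closing remark that ``one must seed the induction differently'' correctly anticipates this; once you start at the outermost nonzero coefficient and sweep inward rather than outward from $j_0$, your proof coincides with the paper's.
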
 
Figures \ref{fig:a2supp00} and \ref{fig:b2supp00} illustrate Theorem \ref{thm:support} for $n=3$ with $\Phi=A_2$ and $n=2$ with $\Phi=B_2$. The plotted points represent the nonzero coefficients of $N(\x;\ell)$, where $x_1^{k_1}x_2^{k_2}$ corresponds to $\x^{k_1\alpha_1+k_2\alpha_2}$.
\begin{figure}[!ht]
\centering
\begin{subfigure}{.49\linewidth}\centering
\resizebox{\linewidth}{!}{\begin{tikzpicture}[scale=.15,>=latex]
	        \pgfmathsetmacro\ax{3}
	        \pgfmathsetmacro\ay{0}
	        \pgfmathsetmacro\bx{3* cos(120)}
	        \pgfmathsetmacro\by{3 * sin(120)}
\begin{scope}
\clip (-15,-5) rectangle (30,30);
            	\begin{scope}[cm={\ax,\ay,\bx,\by,(0,0)}]
            	\draw[style=help lines,gray] (-15,-15) grid (15,15);
            	\draw[cm={1,0,1,1,(0,0)},style=help lines, gray] (-15,-15) grid (15,15);
            	\end{scope}
\begin{scope}[cm={\ax,\ay,\bx,\by,(0,0)},every node/.style={circle,fill=black!50!green,inner sep=0pt,minimum size=4pt}]
\node at (0,0) {};
\node at (1,0) {};
\node at (0,1) {};
\node at (2,1) {};
\node at (1,2) {};
\node at (2,2) {};
\end{scope}
\end{scope}
\draw[->,>=stealth,black] (0,0)--(\ax,\ay);
\draw[->,>=stealth,black] (0,0)--(\bx,\by);
\node[right] at  (\ax,\ay) {\(\alpha_1\)};
\node[above left] at (\bx,\by) {\(\alpha_2\)};
\end{tikzpicture}}
\caption{$A_2$, $f(\x;0,0)$}

\end{subfigure}
\begin{subfigure}{.49\linewidth}\centering
\resizebox{\linewidth}{!}{\begin{tikzpicture}[scale=.15,>=latex]
	        \pgfmathsetmacro\ax{3}
	        \pgfmathsetmacro\ay{0}
	        \pgfmathsetmacro\bx{3* cos(120)}
	        \pgfmathsetmacro\by{3 * sin(120)}
\begin{scope}
\clip (-15,-5) rectangle (30,30);
            	\begin{scope}[cm={\ax,\ay,\bx,\by,(0,0)}]
            	\draw[style=help lines,gray] (-15,-15) grid (15,15);
            	\draw[cm={1,0,1,1,(0,0)},style=help lines, gray] (-15,-15) grid (15,15);
            	\end{scope}
\begin{scope}[cm={\ax,\ay,\bx,\by,(0,0)},every node/.style={circle,fill=black!50!green,inner sep=0pt,minimum size=4pt}]
\node at (0,0) {};
\node at (2,0) {};
\node at (0,2) {};
\node at (3,2) {};
\node at (2,3) {};
\node at (3,3) {};
\node at (4,2) {};
\node at (2,4) {};
\node at (4,4) {};
\end{scope}
\end{scope}
\draw[->,>=stealth,black] (0,0)--(\ax,\ay);
\draw[->,>=stealth,black] (0,0)--(\bx,\by);
\node[right] at  (\ax,\ay) {\(\alpha_1\)};
\node[above left] at (\bx,\by) {\(\alpha_2\)};
\end{tikzpicture}}
\caption{$A_2$, $f(\x;1,1)$}
\end{subfigure}
\caption{}
\label{fig:a2supp00}
\end{figure}

\begin{figure}[!ht]

\centering
\begin{subfigure}{.49\linewidth}\centering
\resizebox{\linewidth}{!}{\begin{tikzpicture}[scale=.1,>=latex]
	        \pgfmathsetmacro\ax{3 * sqrt(2)}
	        \pgfmathsetmacro\ay{0}
	        \pgfmathsetmacro\bx{3 * cos(135)}
	        \pgfmathsetmacro\by{3 * sin(135)}
\clip (-20,-5) rectangle (30,30);
            	\begin{scope}[cm={\ax,\ay,\bx,\by,(0,0)}]
            	\draw[style=help lines,gray] (-20,-20) grid (30,30);
            	\draw[cm={1,0,1,1,(0,0)},style=help lines, gray] (-20,-20) grid (30,30);
            	\draw[cm={1,1,0,1,(0,0)},style=help lines, gray] (-20,-20) grid (30,30);
            	\draw[cm={1,2,1,0,(0,0)},style=help lines, gray] (-20,-20) grid (30,30);
            	\end{scope}
\begin{scope}[cm={\ax,\ay,\bx,\by,(0,0)},every node/.style={circle,fill=blue,inner sep=0pt,minimum size=4pt}]
\node at (0,0) {};
\node at (1,0) {};
\node at (0,1) {};
\node at (1,1) {};
\node at (2,1) {};
\node at (1,3) {};
\node at (2,3) {};
\node at (3,3) {};
\node at (2,4) {};
\node at (3,4) {};
\end{scope}
\draw[->,black] (0,0)--(\ax,\ay);
\draw[->,black] (0,0)--(\bx,\by);
\node[right] at  (\ax,\ay) {\(\alpha_1\)};
\node[above left] at (\bx,\by) {\(\alpha_2\)};
\end{tikzpicture}}
\caption{$B_2$, $f(\x;0,0)$}
\end{subfigure}
\begin{subfigure}{.49\linewidth}\centering
\resizebox{\linewidth}{!}{\begin{tikzpicture}
[scale=.08,>=latex]
\pgfmathsetmacro\ax{3 * sqrt(2)}
\pgfmathsetmacro\ay{0}
\pgfmathsetmacro\bx{3 * cos(135)}
\pgfmathsetmacro\by{3 * sin(135)}
\clip (-30,-5) rectangle (50,50);
\begin{scope}[cm={\ax,\ay,\bx,\by,(0,0)}]
\draw[style=help lines,gray] (-20,-20) grid (30,30);
\draw[cm={1,0,1,1,(0,0)},style=help lines, gray] (-20,-20) grid (30,30);
\draw[cm={1,1,0,1,(0,0)},style=help lines, gray] (-20,-20) grid (30,30);
\draw[cm={1,2,1,0,(0,0)},style=help lines, gray] (-20,-20) grid (30,30);
\end{scope}
\begin{scope}[cm={\ax,\ay,\bx,\by,(0,0)},every node/.style={circle,fill=blue,inner sep=0pt,minimum size=4pt}]
\node at (0,0) {};
\node at (0,2) {};
\node at (0,4) {};
\node at (0,5) {};
\node at (1,0) {};
\node at (1,2) {};
\node at (1,4) {};
\node at (1,5) {};
\node at (1,6) {};
\node at (1,7) {};
\node at (2,0) {};
\node at (2,2) {};
\node at (2,4) {};
\node at (2,5) {};
\node at (2,6) {};
\node at (2,7) {};
\node at (2,8) {};
\node at (2,9) {};
\node at (3,0) {};
\node at (3,2) {};
\node at (3,4) {};
\node at (3,5) {};
\node at (3,6) {};
\node at (3,7) {};
\node at (3,8) {};
\node at (3,9) {};
\node at (3,10) {};
\node at (3,11) {};
\node at (4,2) {};
\node at (4,4) {};
\node at (4,5) {};
\node at (4,6) {};
\node at (4,7) {};
\node at (4,8) {};
\node at (4,9) {};
\node at (4,10) {};
\node at (4,11) {};
\node at (5,2) {};
\node at (5,4) {};
\node at (5,5) {};
\node at (5,6) {};
\node at (5,7) {};
\node at (5,8) {};
\node at (5,9) {};
\node at (5,10) {};
\node at (5,11) {};
\node at (5,12) {};
\node at (5,13) {};
\node at (6,4) {};
\node at (6,5) {};
\node at (6,6) {};
\node at (6,7) {};
\node at (6,8) {};
\node at (6,9) {};
\node at (6,10) {};
\node at (6,11) {};
\node at (6,12) {};
\node at (6,13) {};
\node at (7,4) {};
\node at (7,5) {};
\node at (7,6) {};
\node at (7,7) {};
\node at (7,8) {};
\node at (7,9) {};
\node at (7,10) {};
\node at (7,11) {};
\node at (7,12) {};
\node at (7,13) {};
\node at (7,14) {};
\node at (7,15) {};
\node at (8,5) {};
\node at (8,6) {};
\node at (8,7) {};
\node at (8,8) {};
\node at (8,9) {};
\node at (8,10) {};
\node at (8,11) {};
\node at (8,12) {};
\node at (8,13) {};
\node at (8,14) {};
\node at (8,15) {};
\node at (8,16) {};
\node at (9,7) {};
\node at (9,8) {};
\node at (9,9) {};
\node at (9,10) {};
\node at (9,11) {};
\node at (9,12) {};
\node at (9,13) {};
\node at (9,14) {};
\node at (9,15) {};
\node at (9,16) {};
\node at (10,9) {};
\node at (10,10) {};
\node at (10,11) {};
\node at (10,12) {};
\node at (10,13) {};
\node at (10,14) {};
\node at (10,15) {};
\node at (10,16) {};
\node at (11,11) {};
\node at (11,13) {};
\node at (11,15) {};
\node at (11,16) {};
\end{scope}
\draw[->,black] (0,0)--(\ax,\ay);
\draw[->,black] (0,0)--(\bx,\by);
\node[right] at  (\ax,\ay) {\(\alpha_1\)};
\node[above left] at (\bx,\by) {\(\alpha_2\)};
\end{tikzpicture}}
\caption{$B_2$, $f(\x;2,4)$}
\end{subfigure}
\caption{}
\label{fig:b2supp00}
\end{figure}

We argue as in \cite{ppart} using the recurrence relation of Theorem \ref{thm:rec}. We require two geometric lemmas. Note that $\Pi_\theta$  is cut out by the inequalities
\begin{equation}
\label{eqn:ineq}
( w \varpi_i,\x-(\theta-w\theta))\geq 0\hspace{1cm} \, w \in W,\, i=1,\ldots,r.
\end{equation}
This system of inequalities is redundant in the sense of the following lemma.
\begin{lem}{\cite[Lemma 3.4]{ppart}}
\label{lem:ineq}
Let $\sigma_k \in \mathcal{R}(w):=\{\sigma_i:l(w\sigma_i)<l(w)\}$ and let $u=w\sigma_k$. If $k \neq j$, the inequalities 
\[
( w\varpi_j,\x-(\theta-w\theta))\geq 0
\]
and
\[
( u\varpi_j,\x-(\theta-u\theta)) \geq 0
\]
are equivalent. \qed
\end{lem}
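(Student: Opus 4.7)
The plan is to show that the two inequalities in fact coincide after a short computation, so the hypothesis $\sigma_k \in \mathcal{R}(w)$ plays no role in the algebraic equivalence itself (it is presumably used only in the inductive applications of this lemma).

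First I would observe that for $k \neq j$ the simple reflection $\sigma_k$ fixes the fundamental weight $\varpi_j$. Indeed, by \eqref{eqn:sigj} together with the defining relation $(\varpi_j, \check\alpha_k) = \delta_{jk}$,
\[
\sigma_k \varpi_j = \varpi_j - \langle \varpi_j, \alpha_k\rangle \alpha_k = \varpi_j - \delta_{jk}\alpha_k = \varpi_j.
\]
Consequently $u\varpi_j = w\sigma_k \varpi_j = w\varpi_j$, so the left factor in the bilinear form is the same in both inequalities. Expanding each of them, the problem reduces to checking the single scalar identity
\[
(w\varpi_j, w\theta) = (w\varpi_j, u\theta), \qquad \text{equivalently} \qquad (w\varpi_j, (u-w)\theta) = 0.
\]

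Next I would compute $(u-w)\theta$ directly. Since $\theta = \sum_i (l_i+1)\varpi_i$, we have $\langle \theta, \alpha_k\rangle = l_k + 1$, so $\sigma_k\theta - \theta = -(l_k+1)\alpha_k$ and therefore $(u - w)\theta = w(\sigma_k\theta - \theta) = -(l_k+1)\,w\alpha_k$. By $W$-invariance of the inner product,
\[
(w\varpi_j, (u-w)\theta) = -(l_k+1)(\varpi_j, \alpha_k).
\]
Because $\check\alpha_k = 2\alpha_k/(\alpha_k,\alpha_k)$ is a positive scalar multiple of $\alpha_k$, the relation $(\varpi_j, \check\alpha_k) = \delta_{jk}$ immediately gives $(\varpi_j, \alpha_k) = 0$ whenever $j \neq k$. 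This finishes the reduction, and the two inequalities are seen to coincide.

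There is no genuine obstacle in the argument; the only thing to watch is the bookkeeping between the coroot pairing $\langle \cdot, \cdot\rangle$ (in which the $\varpi_j$ and $\check\alpha_i$ are dual bases by definition) and the symmetric inner product $(\cdot, \cdot)$. Since these differ by a positive scalar on each $\alpha_k$-line, they share the same orthogonality relations, which is exactly what lets the coroot duality transfer to the root-level statement $(\varpi_j, \alpha_k) = 0$ used at the end.
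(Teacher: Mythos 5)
Your proof is correct, and the computation is the natural one. Since the paper states this lemma with a \qed and a citation to \cite[Lemma 3.4]{ppart} rather than reproducing a proof, there is no in-paper argument to compare against; but the route you take — observe $\sigma_k\varpi_j=\varpi_j$ for $j\neq k$, hence $u\varpi_j=w\varpi_j$, then reduce equivalence to $(w\varpi_j,(u-w)\theta)=0$, compute $(u-w)\theta=-(l_k+1)w\alpha_k$, and finish by $W$-invariance and $(\varpi_j,\alpha_k)=0$ — is essentially forced by the structure of the problem and is surely the argument in the reference as well. Your side remark that the descent hypothesis $\sigma_k\in\mathcal R(w)$ is not used in the algebraic equivalence is accurate: the identity you prove is that the two inequalities are \emph{literally the same} for any $w$ and any $k\neq j$, so the hypothesis is only there because it matches the setting in which the lemma is invoked during the induction in Theorem \ref{thm:support}. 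The one small point worth making explicit is that $\langle\theta,\alpha_k\rangle=l_k+1$ comes from $\theta=\sum_i(l_i+1)\varpi_i$ and $\langle\varpi_i,\alpha_k\rangle=\delta_{ik}$; you state this cleanly. No gaps.
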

\noindent Moreover, $\Pi_\theta$ has bounded support.
\begin{lem}{\cite[Lemma 3.5]{ppart}}
\label{lem:ray}
Let $\mu=\theta-w\theta$ be a vertex of $\Pi_\theta$, and suppose $\sigma_k \in \mathcal{L}(w):=\{\sigma_i:l(\sigma_iw)<l(w)\}$. Then any lattice point of the form $\mu+b\alpha_k$, where $b$ is a positive integer, lies outside of $\Pi_\theta$. Similarly, let $u=\sigma_kw$ and let $\lambda=\theta-u\theta$. Then any point of the form $\lambda-b\alpha_k$, where $b$ is a positive integer, lies outside of $\Pi_\theta$. \qed
\end{lem}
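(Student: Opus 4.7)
The plan is a tangent-cone argument at the vertices of $\Pi_\theta$. Since $\theta$ is strongly dominant, its $W$-stabilizer is trivial, so $\Pi_\theta$ is a simple polytope whose vertex set is the full orbit $\{\theta - w\theta : w \in W\}$. A short calculation shows that the $r$ edges at the vertex $\theta - w\theta$ lead to $\theta - w\sigma_i\theta = (\theta - w\theta) + (l_i+1)\,w\alpha_i$ for $i = 1,\dots,r$, so the tangent cone to $\Pi_\theta$ at $\theta - w\theta$ is exactly the convex cone $C_w$ generated by the directions $w\alpha_1,\dots,w\alpha_r$. By convexity, $\Pi_\theta \subseteq (\theta - w\theta) + C_w$.

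For the first assertion I would take $\mu = \theta - w\theta$: a point $\mu + b\alpha_k$ with $b > 0$ lies in $\Pi_\theta$ only if $\alpha_k \in C_w$, equivalently $w^{-1}\alpha_k$ has all nonnegative simple-root coefficients. Since $w^{-1}\alpha_k$ is a root, this would make it a positive root, but $\sigma_k \in \mathcal{L}(w)$ says exactly that $w^{-1}\alpha_k < 0$. Hence $\mu + b\alpha_k \notin \Pi_\theta$ for any $b > 0$.

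The second assertion follows by the same argument at $\lambda = \theta - u\theta$ with $u = \sigma_k w$. One asks whether $-\alpha_k \in C_u$, which after applying $u^{-1}$ amounts to $-u^{-1}\alpha_k$ lying in the cone of nonnegative simple-root combinations. From $u^{-1} = w^{-1}\sigma_k$ one computes $-u^{-1}\alpha_k = -w^{-1}\sigma_k\alpha_k = w^{-1}\alpha_k$, so the condition once more reduces to $w^{-1}\alpha_k$ being a positive root, which the hypothesis $\sigma_k \in \mathcal{L}(w)$ forbids. Thus $\lambda - b\alpha_k \notin \Pi_\theta$ for any $b > 0$.

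The only nontrivial ingredient is the identification of the tangent cone at the vertex $\theta - w\theta$ with $C_w$. This rests on the standard fact that the $W$-orbit polytope of a regular dominant weight is simple and has 1-skeleton equal to the Cayley graph of $W$ with respect to the simple reflections acting on the right; I would cite this from a reference on Coxeter groups (e.g.\ Humphreys) rather than prove it from scratch. Everything else is pure linear algebra together with the characterization of roots by signs of their simple-root coefficients.
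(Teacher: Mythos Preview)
The paper does not prove this lemma at all: it is quoted verbatim from \cite[Lemma 3.5]{ppart} and marked with a \qed. So there is no proof in the paper to compare against, and your task reduces to checking that your argument is sound.

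Your argument is correct. The computation of the edge directions at $\theta-w\theta$ is right, the identification of the tangent cone with $C_w=w(\text{positive chamber cone})$ is the standard fact about $W$-permutohedra for a regular weight, and the containment $\Pi_\theta\subseteq(\theta-w\theta)+C_w$ is immediate from convexity. The reduction of ``$\alpha_k\in C_w$'' to ``$w^{-1}\alpha_k>0$'' and the use of the length condition $l(\sigma_k w)<l(w)\Leftrightarrow w^{-1}\alpha_k<0$ are both standard. The second assertion is handled correctly via $u^{-1}\alpha_k=-w^{-1}\alpha_k$.

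It is worth noting that your tangent-cone argument is dual to the proof one would write directly from the defining half-spaces \eqref{eqn:ineq}: at the vertex $\theta-w\theta$ the active inequalities are $(w\varpi_i,\x-(\theta-w\theta))\ge 0$, and plugging in $\x=\mu+b\alpha_k$ gives $b(\varpi_i,w^{-1}\alpha_k)\ge 0$ for all $i$, forcing $w^{-1}\alpha_k$ to have nonnegative simple-root coefficients. This is almost certainly how \cite{ppart} argues, and it avoids having to cite the structure of the $1$-skeleton of the orbit polytope. Either route is fine; yours is slightly more geometric, the inequality version slightly more self-contained given how the paper has already set things up.
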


\begin{proof}[Proof of Theorem \ref{thm:support}]
To avoid repetition, we summarize the key ideas here and refer the reader to \cite{ppart} for details. Recall that $N(\x;\ell)=\sum a_\lambda \x^\lambda$ is a polynomial. The goal is to show that  for $\lambda \notin \Pi_\theta$ we have $a_\lambda=0$. Equivalently we show that if
$(w \varpi_i,\lambda-(\theta-w\theta))<0$
for some $w \in W$ and $i=1,\ldots, r$, then $a_\lambda=0$.
We induct on the length of $w$. If $l(w)=0$ and $\lambda$ violates the inequalities active at the origin, then clearly $a_\lambda=0$. Otherwise we would have polar terms, a contradiction since $N(\x;\ell)$ is polynomial. 

Now suppose that $l(w)>0$ and that we have verified the inequalities at all vertices where $\theta-u\theta$, where $l(u)<l(w)$. It follows from Lemma 3.3 and the induction hypothesis that it suffices  to consider the case when $\mathcal{R}(w)=\{\sigma_k\}$ for some $k=1,\ldots, r$ and to prove $a_\lambda=0$ if $\lambda$ violates the inequality 
\begin{equation}
\label{eqn:sigk}
(w\varpi_k,\x-(\theta-w\theta))\geq0.
\end{equation}

For the sake of contradiction, let $\sigma_j \in \mathcal{L}(w)$ and choose $\mu \in \Lambda$ such that $\mu$ violates \eqref{eqn:sigk}, but $a_\mu \neq 0$. Further assume that $a_{\mu'}=0$ for all $\mu'=\mu+b\alpha_j$ with $b>0$.  
In other words, $\mu$ is the final point of support on the ray $\mu+b\alpha_j$. By Lemma \ref{lem:ray}, such a $\mu$ exists.

We apply Theorem \ref{thm:rec} with $\sigma=\sigma_j$ to $a_\mu$, where $a_\mu$ is the first coefficient on the right-hand side of the relation. Then $\mu=\sigma\bullet\lambda$ for some $\lambda \in \Lambda$. Here we set $m=n(\alpha_j)$ and $\delta=\delta_j(\lambda)$. We have $a_{\mu+b\alpha_j}=0$ for all $b>0$, and if $\delta \equiv 0 \mod m$ then $g_{-\|\alpha\|^2\delta}^\ast(p)=-1$. Thus, the right-hand side yields $g^\ast_{-\|\alpha\|^2\delta}(p)|p|^{1-\delta}a_\mu$ regardless of the value of $\delta \mod m$.

In the case $\delta\equiv 0\mod m$, the left-hand side is
\[
-|p|^{1+m}a_{\lambda-m\alpha_j}+a_\lambda.
\]
Otherwise, we have 
\[
{g^*_{-\|\alpha\|^2\delta}(p)}|p|^{1+m-(\delta)_m}a_{\lambda-(m-(\delta)_m)\alpha_j}+a_\lambda.
\]
One checks that $a_\lambda$ vanishes by the induction hypothesis, since $l(\sigma_jw)<l(w)$ implies $\lambda=\theta-\sigma_jw\theta$ violates the inequalities active at $\theta-\sigma_jw\theta$. Again by Lemma \ref{lem:ray} this implies $a_{\lambda-m\alpha}$ and $a_{\lambda-(m-(\delta)_m)\alpha_j}$ also vanish. Therefore, the left-hand side is identically zero and $a_\mu$ vanishes. It follows that $a_\mu=0$ unless $\mu$ satisfies \eqref{eqn:sigk}.
\end{proof}

\subsection{Support of modified $p$-parts}
We now take a slight detour from our main goal to consider the support of the modified $p$-parts $f(\x;\ell):=\Delta(\x)F(\x;\ell)=\Delta(\x)N(\x;\ell)/D(\x)$. As mentioned in the introduction, the $f(\x;\ell)$ coincide, up to a change of variables, with Weyl group multiple Dirichlet series associated to the rational function field over a finite field. We expect that Weyl group multiple Dirichlet series associated to function fields over finite fields encode arithmetic data about the arithmetic of the defining curve, much like zeta functions. Theorem \ref{thm:gap} has applications toward understanding the support of these series, which the author will explore in a forthcoming paper. We include the result here as a complement to Theorem \ref{thm:support}.

\begin{thm}
\label{thm:gap}
The support of $f(\x;\ell)$ lies outside the interior of the polytope $\Pi_\theta$.
\end{thm}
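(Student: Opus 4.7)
The plan is to prove Theorem \ref{thm:gap} by adapting the strategy used for Theorem \ref{thm:support}: derive a functional equation for $f(\x;\ell)$ from the $W$-invariance of $F(\x;\ell)$, extract a coefficient recurrence, and then run an analogous induction on Weyl length. The crux is that, whereas $F$ is $|_\ell$-invariant, $f=\Delta F$ is only anti-invariant up to an explicit monomial, and this anti-invariance encodes exactly the desired gap in the support.

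The first step is to establish
\[
(f|_\ell \sigma_k)(\x) \;=\; -|p|^{-m}\x^{-m\alpha_k}\,f(\x;\ell), \qquad m := n(\alpha_k).
\]
Since the support of $\Delta(\x)$ lies in $\Lambda'$, one has $\Delta \in \tilde A_0$, so the product formula \eqref{eqn:actprod} applied to $f=\Delta F$ gives $(f|_\ell \sigma_k)(\x) = \Delta(\sigma_k\x)\,F(\x;\ell) = [\Delta(\sigma_k\x)/\Delta(\x)]\,f(\x;\ell)$, using the $W$-invariance of $F$. The ratio $\Delta(\sigma_k\x)/\Delta(\x)$ is a direct computation: under the change of variables $\sigma_k$, the factor of $\Delta$ indexed by $\alpha_k$ transforms from $1-|p|^m\x^{m\alpha_k}$ to $1-|p|^{-m}\x^{-m\alpha_k}$, while the factors indexed by other positive roots get permuted among themselves. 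Simplifying $(1-|p|^{-m}\x^{-m\alpha_k})/(1-|p|^m\x^{m\alpha_k})$ yields the claimed $-|p|^{-m}\x^{-m\alpha_k}$.

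The second step is to derive a coefficient recurrence from this anti-invariance. Writing $f(\x;\ell) = \sum_\mu b_\mu \x^\mu$, expanding both sides in formal series, and matching the coefficient of $\x^\lambda$ exactly as in the proof of Theorem \ref{thm:rec}, one obtains a recurrence of the same shape as \eqref{eqn:rec1}--\eqref{eqn:rec2}, but with right-hand side driven by the simpler monomial multiplier $-|p|^{-m}\x^{-m\alpha_k}$ in place of $D(\x)/D(\sigma_k\x)$. In particular, the recurrence is one-dimensional along $\alpha_k$-lines: $\sigma_k\bullet\lambda = \sigma_k\lambda + (l_k+1)\alpha_k$ and $\lambda - m\alpha_k$ both lie on $\lambda + \Z\alpha_k$, so the coupled coefficients are all on a single such line.

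The third step is to run the induction on Weyl length from the proof of Theorem \ref{thm:support}, using Lemmas \ref{lem:ineq} and \ref{lem:ray}, to conclude that $b_\mu = 0$ whenever $\mu$ lies in the interior of $\Pi_\theta$. This is where the main obstacle lies. The argument in Theorem \ref{thm:support} is triggered by the \emph{violation} of a single defining inequality of $\Pi_\theta$, whereas our hypothesis is the quantifier-dual statement that $\mu$ satisfies \emph{all} defining inequalities strictly. The workaround is that the recurrence is one-dimensional along $\alpha_k$-lines and Lemma \ref{lem:ray} guarantees any such line exits $\Pi_\theta$ in both directions; working facet-by-facet as in \cite{ppart}, one uses the recurrence together with the inductive vanishing of $b$ at those exit points to force $b_\mu=0$ for $\mu$ strictly inside. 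Carrying out this facet-by-facet bookkeeping uniformly in $\Phi$ and $n$, and verifying the delicate cancellations that make the recurrence collapse, is the technical heart of the proof.
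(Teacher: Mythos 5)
Your Step 1 is correct: since $\Delta(\x)$ has support in $\Lambda'$, the product rule \eqref{eqn:actprod} and the $|_\ell$-invariance of $F$ give $(f|_\ell\sigma_k)(\x)=[\Delta(\sigma_k\x)/\Delta(\x)]f(\x;\ell)$, and a direct computation of the ratio does yield $-|p|^{-m}\x^{-m\alpha_k}$. The genuine gap is in Step 3, and it is fatal to the plan as written. The induction behind Theorem \ref{thm:support} rests on two features that $f(\x;\ell)$ does not have. First, the base case there used that $N$ is a \emph{polynomial}, so coefficients outside the positive cone vanish for free; $f=\Delta N/D$ is only a formal power series, with unbounded support (visible in Figures~\ref{fig:a2gap00}--\ref{fig:b2gap00}), so there is no such anchor. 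Second, and more importantly, your workaround invokes ``the inductive vanishing of $b$ at those exit points'' where the $\alpha_k$-line leaves $\Pi_\theta$ --- but those exit points lie on the boundary of, or outside, $\Pi_\theta$, and the coefficients of $f$ are emphatically \emph{not} zero there: $f$ is densely supported far outside the polytope. So the recurrence does not transfer vanishing inward from a region where you know nothing. The quantifier flip you flag (from ``violates one inequality'' to ``satisfies all strictly'') is not merely a bookkeeping nuisance; it changes which region you are trying to clear, and the Weyl-length induction has no foothold there.

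The paper instead avoids any recurrence and works with the explicit decomposition $\Delta(\x)F(\x;\ell)=\sum_{w\in W}j(w,\x)(1|_\ell w)(\x)$. One shows by induction on $l(w)$, using the transformation rule $j(w,\sigma_k\x)=j(w\sigma_k,\x)/(|p|^{n(\alpha_k)d(\alpha_k)}\x^{n(\alpha_k)\alpha_k})$ and the structure of the $P,Q$ factors in \eqref{eqn:act}, that the $w$-th summand is supported on the affine cone with apex at the vertex $\theta-w\theta$ and directions generated by $\Phi(w)$. Each such cone is disjoint from the interior of $\Pi_\theta$, and the union over $w$ covers the support of $f$, which gives the theorem. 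If you want to salvage your approach, you would need a separate argument to pin down the support of $f$ near each vertex before the recurrence can say anything useful --- at which point you have essentially reproduced the cone-by-cone argument anyway.
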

Figures \ref{fig:a2gap00} and \ref{fig:b2gap00} illustrate Theorem \ref{thm:gap} for $n=3$ with $\Phi=A_2$ and $n=2$ with $\Phi=B_2$. The plotted points represent the nonzero coefficients of $f(\x;\ell)=\Delta(\x)F(x;\ell)$, where $x_1^{k_1}x_2^{k_2}$ corresponds to $\x^{k_1\alpha_1+k_2\alpha_2}$.

\begin{figure}[!ht]
\centering
\begin{subfigure}{.49\linewidth}\centering
\resizebox{\linewidth}{!}{\begin{tikzpicture}[scale=.15,>=latex]
	        \pgfmathsetmacro\ax{3}
	        \pgfmathsetmacro\ay{0}
	        \pgfmathsetmacro\bx{3* cos(120)}
	        \pgfmathsetmacro\by{3 * sin(120)}
\begin{scope}
\clip (-15,-5) rectangle (30,30);
            	\begin{scope}[cm={\ax,\ay,\bx,\by,(0,0)}]
            	\draw[style=help lines,gray] (-15,-15) grid (15,15);
            	\draw[cm={1,0,1,1,(0,0)},style=help lines, gray] (-15,-15) grid (15,15);
            	\end{scope}
\begin{scope}[cm={\ax,\ay,\bx,\by,(0,0)},every node/.style={circle,fill=black!50!green,inner sep=0pt,minimum size=4pt}]
\fill[black!50!green!50] (0,0)--(1,0)--(2,1)--(2,2)--(1,2)--(0,1)--(0,0);
\node at (0,0) {};
\node at (0,1) {};
\node at (0,3) {};
\node at (0,4) {};
\node at (0,6) {};
\node at (0,7) {};
\node at (0,9) {};
\node at (0,10) {};
\node at (0,12) {};
\node at (0,13) {};
\node at (0,15) {};
\node at (0,16) {};
\node at (0,18) {};
\node at (0,19) {};
\node at (1,0) {};
\node at (1,2) {};
\node at (1,3) {};
\node at (1,5) {};
\node at (1,6) {};
\node at (1,8) {};
\node at (1,9) {};
\node at (1,11) {};
\node at (1,12) {};
\node at (1,14) {};
\node at (1,15) {};
\node at (1,17) {};
\node at (1,18) {};
\node at (2,1) {};
\node at (2,4) {};
\node at (2,7) {};
\node at (2,10) {};
\node at (2,13) {};
\node at (2,16) {};
\node at (2,19) {};
\node at (3,0) {};
\node at (3,1) {};
\node at (3,3) {};
\node at (3,4) {};
\node at (3,6) {};
\node at (3,7) {};
\node at (3,9) {};
\node at (3,10) {};
\node at (3,12) {};
\node at (3,13) {};
\node at (3,15) {};
\node at (3,16) {};
\node at (3,18) {};
\node at (3,19) {};
\node at (4,0) {};
\node at (4,2) {};
\node at (4,3) {};
\node at (4,5) {};
\node at (4,6) {};
\node at (4,8) {};
\node at (4,9) {};
\node at (4,11) {};
\node at (4,12) {};
\node at (4,14) {};
\node at (4,15) {};
\node at (4,17) {};
\node at (4,18) {};
\node at (5,1) {};
\node at (5,4) {};
\node at (5,7) {};
\node at (5,10) {};
\node at (5,13) {};
\node at (5,16) {};
\node at (5,19) {};
\node at (6,0) {};
\node at (6,1) {};
\node at (6,3) {};
\node at (6,4) {};
\node at (6,6) {};
\node at (6,7) {};
\node at (6,9) {};
\node at (6,10) {};
\node at (6,12) {};
\node at (6,13) {};
\node at (6,15) {};
\node at (6,16) {};
\node at (6,18) {};
\node at (6,19) {};
\node at (7,0) {};
\node at (7,2) {};
\node at (7,3) {};
\node at (7,5) {};
\node at (7,6) {};
\node at (7,8) {};
\node at (7,9) {};
\node at (7,11) {};
\node at (7,12) {};
\node at (7,14) {};
\node at (7,15) {};
\node at (7,17) {};
\node at (7,18) {};
\node at (8,1) {};
\node at (8,4) {};
\node at (8,7) {};
\node at (8,10) {};
\node at (8,13) {};
\node at (8,16) {};
\node at (8,19) {};
\node at (9,0) {};
\node at (9,1) {};
\node at (9,3) {};
\node at (9,4) {};
\node at (9,6) {};
\node at (9,7) {};
\node at (9,9) {};
\node at (9,10) {};
\node at (9,12) {};
\node at (9,13) {};
\node at (9,15) {};
\node at (9,16) {};
\node at (9,18) {};
\node at (9,19) {};
\node at (10,0) {};
\node at (10,2) {};
\node at (10,3) {};
\node at (10,5) {};
\node at (10,6) {};
\node at (10,8) {};
\node at (10,9) {};
\node at (10,11) {};
\node at (10,12) {};
\node at (10,14) {};
\node at (10,15) {};
\node at (10,17) {};
\node at (10,18) {};
\node at (11,1) {};
\node at (11,4) {};
\node at (11,7) {};
\node at (11,10) {};
\node at (11,13) {};
\node at (11,16) {};
\node at (11,19) {};
\node at (12,0) {};
\node at (12,1) {};
\node at (12,3) {};
\node at (12,4) {};
\node at (12,6) {};
\node at (12,7) {};
\node at (12,9) {};
\node at (12,10) {};
\node at (12,12) {};
\node at (12,13) {};
\node at (12,15) {};
\node at (12,16) {};
\node at (12,18) {};
\node at (12,19) {};
\node at (13,0) {};
\node at (13,2) {};
\node at (13,3) {};
\node at (13,5) {};
\node at (13,6) {};
\node at (13,8) {};
\node at (13,9) {};
\node at (13,11) {};
\node at (13,12) {};
\node at (13,14) {};
\node at (13,15) {};
\node at (13,17) {};
\node at (13,18) {};
\node at (14,1) {};
\node at (14,4) {};
\node at (14,7) {};
\node at (14,10) {};
\node at (14,13) {};
\node at (14,16) {};
\node at (14,19) {};
\node at (15,0) {};
\node at (15,1) {};
\node at (15,3) {};
\node at (15,4) {};
\node at (15,6) {};
\node at (15,7) {};
\node at (15,9) {};
\node at (15,10) {};
\node at (15,12) {};
\node at (15,13) {};
\node at (15,15) {};
\node at (15,16) {};
\node at (15,18) {};
\node at (15,19) {};
\node at (16,0) {};
\node at (16,2) {};
\node at (16,3) {};
\node at (16,5) {};
\node at (16,6) {};
\node at (16,8) {};
\node at (16,9) {};
\node at (16,11) {};
\node at (16,12) {};
\node at (16,14) {};
\node at (16,15) {};
\node at (16,17) {};
\node at (16,18) {};
\node at (17,1) {};
\node at (17,4) {};
\node at (17,7) {};
\node at (17,10) {};
\node at (17,13) {};
\node at (17,16) {};
\node at (17,19) {};
\node at (18,0) {};
\node at (18,1) {};
\node at (18,3) {};
\node at (18,4) {};
\node at (18,6) {};
\node at (18,7) {};
\node at (18,9) {};
\node at (18,10) {};
\node at (18,12) {};
\node at (18,13) {};
\node at (18,15) {};
\node at (18,16) {};
\node at (18,18) {};
\node at (18,19) {};
\node at (19,0) {};
\node at (19,2) {};
\node at (19,3) {};
\node at (19,5) {};
\node at (19,6) {};
\node at (19,8) {};
\node at (19,9) {};
\node at (19,11) {};
\node at (19,12) {};
\node at (19,14) {};
\node at (19,15) {};
\node at (19,17) {};
\node at (19,18) {};
\end{scope}
\end{scope}
\draw[->,>=stealth,black] (0,0)--(\ax,\ay);
\draw[->,>=stealth,black] (0,0)--(\bx,\by);
\node[right] at  (\ax,\ay) {\(\alpha_1\)};
\node[above left] at (\bx,\by) {\(\alpha_2\)};
\end{tikzpicture}}
\caption{$A_2$, $f(\x;0,0)$}
\label{fig:a2gap00}
\end{subfigure}
\begin{subfigure}{.49\linewidth}\centering
\resizebox{\linewidth}{!}{\begin{tikzpicture}[scale=.15,>=latex]
	        \pgfmathsetmacro\ax{3}
	        \pgfmathsetmacro\ay{0}
	        \pgfmathsetmacro\bx{3* cos(120)}
	        \pgfmathsetmacro\by{3 * sin(120)}
\begin{scope}
\clip (-15,-5) rectangle (30,30);
            	\begin{scope}[cm={\ax,\ay,\bx,\by,(0,0)}]
            	\draw[style=help lines,gray] (-15,-15) grid (15,15);
            	\draw[cm={1,0,1,1,(0,0)},style=help lines, gray] (-15,-15) grid (15,15);
            	\end{scope}
\begin{scope}[cm={\ax,\ay,\bx,\by,(0,0)},every node/.style={circle,fill=black!50!green,inner sep=0pt,minimum size=4pt}]
\fill[black!50!green!50] (0,0)--(2,0)--(4,2)--(4,4)--(2,4)--(0,2)--(0,0);
\node at (0,0) {};
\node at (0,3) {};
\node at (0,6) {};
\node at (0,9) {};
\node at (0,12) {};
\node at (0,15) {};
\node at (0,18) {};
\node at (2,4) {};
\node at (2,7) {};
\node at (2,10) {};
\node at (2,13) {};
\node at (2,16) {};
\node at (2,19) {};
\node at (3,0) {};
\node at (3,6) {};
\node at (3,9) {};
\node at (3,12) {};
\node at (3,15) {};
\node at (3,18) {};
\node at (4,2) {};
\node at (4,5) {};
\node at (4,8) {};
\node at (4,11) {};
\node at (4,14) {};
\node at (4,17) {};
\node at (4,20) {};
\node at (5,4) {};
\node at (5,7) {};
\node at (5,10) {};
\node at (5,13) {};
\node at (5,16) {};
\node at (5,19) {};
\node at (6,0) {};
\node at (6,3) {};
\node at (6,6) {};
\node at (6,9) {};
\node at (6,12) {};
\node at (6,15) {};
\node at (6,18) {};
\node at (7,2) {};
\node at (7,5) {};
\node at (7,8) {};
\node at (7,11) {};
\node at (7,14) {};
\node at (7,17) {};
\node at (7,20) {};
\node at (8,4) {};
\node at (8,7) {};
\node at (8,10) {};
\node at (8,13) {};
\node at (8,16) {};
\node at (8,19) {};
\node at (9,0) {};
\node at (9,3) {};
\node at (9,6) {};
\node at (9,9) {};
\node at (9,12) {};
\node at (9,15) {};
\node at (9,18) {};
\node at (10,2) {};
\node at (10,5) {};
\node at (10,8) {};
\node at (10,11) {};
\node at (10,14) {};
\node at (10,17) {};
\node at (10,20) {};
\node at (11,4) {};
\node at (11,7) {};
\node at (11,10) {};
\node at (11,13) {};
\node at (11,16) {};
\node at (11,19) {};
\node at (12,0) {};
\node at (12,3) {};
\node at (12,6) {};
\node at (12,9) {};
\node at (12,12) {};
\node at (12,15) {};
\node at (12,18) {};
\node at (13,2) {};
\node at (13,5) {};
\node at (13,8) {};
\node at (13,11) {};
\node at (13,14) {};
\node at (13,17) {};
\node at (13,20) {};
\node at (14,4) {};
\node at (14,7) {};
\node at (14,10) {};
\node at (14,13) {};
\node at (14,16) {};
\node at (14,19) {};
\node at (15,0) {};
\node at (15,3) {};
\node at (15,6) {};
\node at (15,9) {};
\node at (15,12) {};
\node at (15,15) {};
\node at (15,18) {};
\node at (16,2) {};
\node at (16,5) {};
\node at (16,8) {};
\node at (16,11) {};
\node at (16,14) {};
\node at (16,17) {};
\node at (16,20) {};
\node at (17,4) {};
\node at (17,7) {};
\node at (17,10) {};
\node at (17,13) {};
\node at (17,16) {};
\node at (17,19) {};
\node at (18,0) {};
\node at (18,3) {};
\node at (18,6) {};
\node at (18,9) {};
\node at (18,12) {};
\node at (18,15) {};
\node at (18,18) {};
\node at (19,2) {};
\node at (19,5) {};
\node at (19,8) {};
\node at (19,11) {};
\node at (19,14) {};
\node at (19,17) {};
\node at (19,20) {};
\node at (20,4) {};
\node at (20,7) {};
\node at (20,10) {};
\node at (20,13) {};
\node at (20,16) {};
\node at (20,19) {};
\end{scope}
\end{scope}
\draw[->,>=stealth,black] (0,0)--(\ax,\ay);
\draw[->,>=stealth,black] (0,0)--(\bx,\by);
\node[right] at  (\ax,\ay) {\(\alpha_1\)};
\node[above left] at (\bx,\by) {\(\alpha_2\)};
\end{tikzpicture}}
\caption{$A_2$, $f(\x;1,1)$}
\end{subfigure}
\caption{}
\label{fig:a2gap00}
\end{figure}

\begin{figure}[!ht]

\centering
\begin{subfigure}{.49\linewidth}\centering
\resizebox{\linewidth}{!}{\begin{tikzpicture}[scale=.1,>=latex]
	        \pgfmathsetmacro\ax{3 * sqrt(2)}
	        \pgfmathsetmacro\ay{0}
	        \pgfmathsetmacro\bx{3 * cos(135)}
	        \pgfmathsetmacro\by{3 * sin(135)}
\clip (-20,-5) rectangle (30,30);
            	\begin{scope}[cm={\ax,\ay,\bx,\by,(0,0)}]
            	\draw[style=help lines,gray] (-20,-20) grid (30,30);
            	\draw[cm={1,0,1,1,(0,0)},style=help lines, gray] (-20,-20) grid (30,30);
            	\draw[cm={1,1,0,1,(0,0)},style=help lines, gray] (-20,-20) grid (30,30);
            	\draw[cm={1,2,1,0,(0,0)},style=help lines, gray] (-20,-20) grid (30,30);
            	\end{scope}
\begin{scope}[cm={\ax,\ay,\bx,\by,(0,0)},every node/.style={circle,fill=blue,inner sep=0pt,minimum size=4pt}]
\fill[blue!50] (0,0)--(1,0)--(2,1)--(3,3)--(3,4)--(2,4)--(1,3)--(0,1)--(0,0);
\node at (0,0) {};
\node at (0,1) {};
\node at (0,2) {};
\node at (0,3) {};
\node at (0,4) {};
\node at (0,5) {};
\node at (0,6) {};
\node at (0,7) {};
\node at (0,8) {};
\node at (0,9) {};
\node at (0,10) {};
\node at (0,11) {};
\node at (0,12) {};
\node at (0,13) {};
\node at (0,14) {};
\node at (0,15) {};
\node at (0,16) {};
\node at (0,17) {};
\node at (0,18) {};
\node at (0,19) {};
\node at (1,0) {};
\node at (1,3) {};
\node at (1,4) {};
\node at (1,5) {};
\node at (1,6) {};
\node at (1,7) {};
\node at (1,8) {};
\node at (1,9) {};
\node at (1,10) {};
\node at (1,11) {};
\node at (1,12) {};
\node at (1,13) {};
\node at (1,14) {};
\node at (1,15) {};
\node at (1,16) {};
\node at (1,17) {};
\node at (1,18) {};
\node at (1,19) {};
\node at (2,1) {};
\node at (2,4) {};
\node at (2,5) {};
\node at (2,6) {};
\node at (2,7) {};
\node at (2,8) {};
\node at (2,9) {};
\node at (2,10) {};
\node at (2,11) {};
\node at (2,12) {};
\node at (2,13) {};
\node at (2,14) {};
\node at (2,15) {};
\node at (2,16) {};
\node at (2,17) {};
\node at (2,18) {};
\node at (2,19) {};
\node at (3,2) {};
\node at (3,3) {};
\node at (3,4) {};
\node at (3,6) {};
\node at (3,7) {};
\node at (3,8) {};
\node at (3,9) {};
\node at (3,10) {};
\node at (3,11) {};
\node at (3,12) {};
\node at (3,13) {};
\node at (3,14) {};
\node at (3,15) {};
\node at (3,16) {};
\node at (3,17) {};
\node at (3,18) {};
\node at (3,19) {};
\node at (4,3) {};
\node at (4,4) {};
\node at (4,5) {};
\node at (4,6) {};
\node at (5,4) {};
\node at (5,5) {};
\node at (5,6) {};
\node at (5,7) {};
\node at (6,5) {};
\node at (6,6) {};
\node at (6,7) {};
\node at (6,8) {};
\node at (7,6) {};
\node at (7,7) {};
\node at (7,8) {};
\node at (7,9) {};
\node at (8,7) {};
\node at (8,8) {};
\node at (8,9) {};
\node at (8,10) {};
\node at (9,8) {};
\node at (9,9) {};
\node at (9,10) {};
\node at (9,11) {};
\node at (10,9) {};
\node at (10,10) {};
\node at (10,11) {};
\node at (10,12) {};
\node at (11,10) {};
\node at (11,11) {};
\node at (11,12) {};
\node at (11,13) {};
\node at (12,11) {};
\node at (12,12) {};
\node at (12,13) {};
\node at (12,14) {};
\node at (13,12) {};
\node at (13,13) {};
\node at (13,14) {};
\node at (13,15) {};
\node at (14,13) {};
\node at (14,14) {};
\node at (14,15) {};
\node at (14,16) {};
\node at (15,14) {};
\node at (15,15) {};
\node at (15,16) {};
\node at (15,17) {};
\node at (16,15) {};
\node at (16,16) {};
\node at (16,17) {};
\node at (16,18) {};
\node at (17,16) {};
\node at (17,17) {};
\node at (17,18) {};
\node at (17,19) {};
\node at (18,17) {};
\node at (18,18) {};
\node at (18,19) {};
\node at (19,18) {};
\node at (19,19) {};
\end{scope}
\draw[->,black] (0,0)--(\ax,\ay);
\draw[->,black] (0,0)--(\bx,\by);
\node[right] at  (\ax,\ay) {\(\alpha_1\)};
\node[above left] at (\bx,\by) {\(\alpha_2\)};
\end{tikzpicture}}
\caption{$B_2$, $f(\x;0,0)$}
\end{subfigure}
\begin{subfigure}{.49\linewidth}\centering
\resizebox{\linewidth}{!}{\begin{tikzpicture}
[scale=.08,>=latex]
	        \pgfmathsetmacro\ax{3 * sqrt(2)}
	        \pgfmathsetmacro\ay{0}
	        \pgfmathsetmacro\bx{3 * cos(135)}
	        \pgfmathsetmacro\by{3 * sin(135)}
\clip (-30,-5) rectangle (50,50);
            	\begin{scope}[cm={\ax,\ay,\bx,\by,(0,0)}]
            	\draw[style=help lines,gray] (-20,-20) grid (30,30);
            	\draw[cm={1,0,1,1,(0,0)},style=help lines, gray] (-20,-20) grid (30,30);
            	\draw[cm={1,1,0,1,(0,0)},style=help lines, gray] (-20,-20) grid (30,30);
            	\draw[cm={1,2,1,0,(0,0)},style=help lines, gray] (-20,-20) grid (30,30);
            	\end{scope}
\begin{scope}[cm={\ax,\ay,\bx,\by,(0,0)},every node/.style={circle,fill=blue,inner sep=0pt,minimum size=4pt}]
\fill[blue!50] (0,0)--(3,0)--(8,5)--(11,11)--(11,16)--(8,16)--(3,11)--(0,5)--(0,0);
\node at (0,0) {};
\node at (0,5) {};
\node at (0,6) {};
\node at (0,7) {};
\node at (0,8) {};
\node at (0,9) {};
\node at (0,10) {};
\node at (0,11) {};
\node at (0,12) {};
\node at (0,13) {};
\node at (0,14) {};
\node at (0,15) {};
\node at (0,16) {};
\node at (0,17) {};
\node at (0,18) {};
\node at (0,19) {};
\node at (0,20) {};
\node at (0,21) {};
\node at (0,22) {};
\node at (0,23) {};
\node at (3,0) {};
\node at (3,11) {};
\node at (3,12) {};
\node at (3,13) {};
\node at (3,14) {};
\node at (3,15) {};
\node at (3,16) {};
\node at (3,17) {};
\node at (3,18) {};
\node at (3,19) {};
\node at (3,20) {};
\node at (3,21) {};
\node at (3,22) {};
\node at (3,23) {};
\node at (8,5) {};
\node at (8,16) {};
\node at (8,17) {};
\node at (8,18) {};
\node at (8,19) {};
\node at (8,20) {};
\node at (8,21) {};
\node at (8,22) {};
\node at (8,23) {};
\node at (9,6) {};
\node at (9,17) {};
\node at (9,18) {};
\node at (9,19) {};
\node at (9,20) {};
\node at (9,21) {};
\node at (9,22) {};
\node at (9,23) {};
\node at (10,7) {};
\node at (10,18) {};
\node at (10,19) {};
\node at (10,20) {};
\node at (10,21) {};
\node at (10,22) {};
\node at (10,23) {};
\node at (11,8) {};
\node at (11,11) {};
\node at (11,16) {};
\node at (11,17) {};
\node at (11,18) {};
\node at (11,20) {};
\node at (11,21) {};
\node at (11,22) {};
\node at (11,23) {};
\node at (12,9) {};
\node at (12,12) {};
\node at (12,17) {};
\node at (12,18) {};
\node at (12,19) {};
\node at (12,20) {};
\node at (13,10) {};
\node at (13,13) {};
\node at (13,18) {};
\node at (13,19) {};
\node at (13,20) {};
\node at (13,21) {};
\node at (14,11) {};
\node at (14,14) {};
\node at (14,19) {};
\node at (14,20) {};
\node at (14,21) {};
\node at (14,22) {};
\node at (15,12) {};
\node at (15,15) {};
\node at (15,20) {};
\node at (15,21) {};
\node at (15,22) {};
\node at (15,23) {};
\node at (16,13) {};
\node at (16,16) {};
\node at (16,21) {};
\node at (16,22) {};
\node at (16,23) {};
\node at (17,14) {};
\node at (17,17) {};
\node at (17,22) {};
\node at (17,23) {};
\node at (18,15) {};
\node at (18,18) {};
\node at (18,23) {};
\node at (19,16) {};
\node at (19,19) {};
\node at (20,17) {};
\node at (20,20) {};
\node at (21,18) {};
\node at (21,21) {};
\end{scope}
\draw[->,black] (0,0)--(\ax,\ay);
\draw[->,black] (0,0)--(\bx,\by);
\node[right] at  (\ax,\ay) {\(\alpha_1\)};
\node[above left] at (\bx,\by) {\(\alpha_2\)};
\end{tikzpicture}}
\caption{$B_2$, $f(\x;2,4)$}
\end{subfigure}
\caption{}
\label{fig:b2gap00}
\end{figure}

Before giving a proof, we recall the following. Let $\Phi(w)=\{\alpha \in \Phi^+:w\alpha \in \Phi^{-}\}$ be the set of positive roots made negative by $w$. One can show that $\#\Phi(w)=l(w)$.  If $l(\sigma_kw)=l(w)+1$, then \cite[Section 5.6]{refgroups}
\begin{equation*}
\Phi(\sigma_kw)=\Phi(w)\cup\{w^{-1}\alpha_k\}.
\end{equation*}
In particular, if $l(w\sigma_k)=l(w)+1$, then
\begin{equation}
\label{eqn:phiw2}
\Phi(\sigma_kw^{-1})=\Phi(w^{-1})\cup\{w\alpha_k\}.
\end{equation}

\begin{proof} By definition $\Delta(\x)F(\x;\ell)=\sum_{w \in W} j(w,\x)(1|_\ell w)(\x)$. Thus, it is enough to show that 
the support of $j(w,\x)(1|_\ell w)(\x)$ is contained in the cone determined by $\Phi(w)$ and shifted by $\theta-w\theta$. 

We prove this claim by induction on $l(w)$. When $l(w)=0$, the statement is clear. Now assume that $l(w\sigma_k)=l(w)+1$. We wish to show that 
\(
j(w\sigma_k,\x)(1|_\ell w\sigma_k)(\x)
\)
is supported on the cone determined by $\Phi(w\sigma_k)$ and shifted by $\theta-\sigma_kw\theta$. By our assumption on the relative lengths of $w$ and $w\sigma_k$, we have
\(
\Phi(w\sigma_k)=\sigma_k(\Phi(w))\cup\{\alpha_k\}.
\)

Since $j(w,\x) \in \tilde{A}_0$ for all $w$, it follows from \eqref{eqn:actprod} that 
\(
[j(w,\x)f|_\ell \sigma_k](\x)=j(w,\sigma_k\x)[f|_\ell(\x)].
\)
By \cite[Lemma 3.3]{wmnds}, up to sign we have
\begin{align*}
j(w,\sigma_k\x)&=\prod_{\alpha \in \Phi(w)} |p|^{n(\alpha)d(\alpha)}(\sigma_k\x)^{n(\alpha)\alpha}\\
&=\prod_{\alpha \in \sigma_k(\Phi(w))}|p|^{n(\alpha)d(\alpha)}\x^{n(\alpha)\alpha}\\
&=\prod_{\alpha \in \Phi(w\sigma_k)\backslash\{\alpha_k\}}|p|^{n(\alpha)d(\alpha)}\x^{n(\alpha)\alpha}\\
&=j(w\sigma_k,\x)/|p|^{n(\alpha_k)d(\alpha_k)}\x^{n(\alpha_k)\alpha_k}
\end{align*}
where we have used that $n(\alpha)=n(\sigma_k\alpha)$. From the above equality, we see
\begin{align*}
j(w\sigma_k,\x)(1|_lw\sigma_k)(\x)&=j(w\sigma_k,\x)[(1|_\ell w)|_l\sigma_k](\x)\\
&=|p|^{n(\alpha_k)d(\alpha_k)}\x^{n(\alpha_k)\alpha_k}\left(j(w,\sigma_k\x)[(1|_\ell w)|_\ell\sigma_k](\x)\right)\\
&=|p|^{n(\alpha_k)d(\alpha_k)}\x^{n(\alpha_k)\alpha_k}\left([j(w,\x)(1|_\ell w)]|_\ell \sigma_k\right)(\x).
\end{align*}

Our inductive hypothesis implies that $j(w,\x)(1|_\ell w)(\x)$ is supported on the cone defined by $\Phi(w)$ and shifted by $\theta-w\theta$. Thus, we can assume that $$j(w,\x)(1|_\ell w)(\x)=\sum f_\beta(\x),$$ where each $f_\beta(\x)$ is a monomial supported on $\Phi(w)$ shifted by $\theta-w\theta$.
Using \eqref{eqn:act}, we have 
\begin{align*}
\left([j(w,\x)(1|_\ell w)]|_\ell \sigma_k\right)(\x)&=\sum [P_{\beta,\ell,k}(x_k)+Q_{\sigma_k\bullet \beta,\ell,k}(x_k)]f_\beta(\sigma_k\x),
\end{align*}
where up to constants $a_\beta$, 
\(
f_\beta(\sigma_k\x)=a_\beta |p|^{d(\sigma_k\beta-\beta)}\x^{\sigma_k\beta}.
\)
By definition, we see that 
\begin{align*}
P_{\beta,\ell,k}(x_k)f_\beta(\sigma_k\x)&=\x^{\sigma_k \bullet \beta} \x^{-\delta_{k}(\beta)_{n(\alpha_k)}} \tilde{P}(x_k)\\
\intertext{and}
Q_{\sigma_k\bullet \beta,\ell,k}(x_k)f_\beta(\sigma_k\x)&=\x^{\sigma_k\bullet\beta}\x^{-n(\alpha_k)} \tilde{Q}(x_k),
\end{align*}
where $\tilde{P}(x_k)$ and $\tilde{Q}(x_k)$ are rational functions supported on the ray determined by $\alpha_k$. After multiplying by $x_k^{n(\alpha_k)}$, each of these two terms is supported on $\sigma_k(\Phi(w))\cup \alpha_k=\Phi(w\sigma_k)$. Moreover, since the original cone had been shifted to the vertex $\x^{w \bullet 0}$, which corresponds to $w$, the new cone is shifted to the vertex $\x^{\sigma_k\bullet w\bullet 0}$, which corresponds to a reflection under $\sigma_k$.
\end{proof}

\section{Stable coefficients}
\label{sec:stable}
In this section we compare the methods of \cite{wmds2, wmds4} and \cite{wmnds} to define $p$-parts of Weyl group multiple Dirichlet series. More specifically, we compare these methods in the ``stable'' case $n \gg r$. In this case, the only nonzero coefficients of $N(\x;\ell)$ are those attached to the vertices of the polytope $\Pi_\theta$ \cite{wmds4,wmds2}. We call these coefficients the stable coefficients. For the precise stability condition, see \cite[Equation 20]{wmds4}. When $\Phi$ is type $A$, it is enough to have $n\geq \sum_{i=1}^r (l_i+1)$.

As in \cite{ppart}, we caution the reader that we now make a slight change in notation: the element $w$ now corresponds to the coefficient of $\lambda=\theta-w^{-1}\theta$. This change is to aid the comparison of the sets $\Phi(w^{-1})$ and $\Phi(\sigma_kw^{-1})$. For each $\theta$, \cite{wmds4,wmds2} defines the stable coefficients 
\begin{equation}
\label{eqn:stabc}
A_\lambda=\prod_{\alpha \in \Phi(w^{-1})} g_{\|\alpha\|^2}(p^{d_\theta(\alpha)-1},p^{d_\theta(\alpha)}),
\end{equation}
where $\lambda=\theta-w\theta$ for some $w \in W$ and $d_\theta(\alpha):=\lang \theta,\alpha\rang$.

\begin{thm}
\label{thm:stable}
Let $\theta-w^{-1}\theta=\sum k_i\alpha_i$, and let $N(\x;\ell)=\sum a_\lambda \x^\lambda$ be the $p$-part constructed via \cite{wmnds}. Then  
\(
a_\lambda=A_\lambda.
\)
In other words, the stable coefficients of \cite{wmds2,wmds4} and \cite{wmnds} agree. 
\end{thm}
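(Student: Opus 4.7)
The plan is to induct on the length $l(w)$. For the base case $w=e$, the vertex is $\lambda=0$ and $A_0=1$ as an empty product; this equals the constant term $a_0$ of $N(\x;\ell)$, as is visible from the averaging formula: only $w=e$ contributes nontrivially to the constant term of the numerator, and both $\Delta(\x)$ and $D(\x)$ have constant term $1$.

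For the inductive step with $l(w)\geq 1$, pick $\sigma_k\in\mathcal{R}(w)$ and set $w'=w\sigma_k$, so that $l(w')=l(w)-1$. Writing $\lambda=\theta-w^{-1}\theta$ and $\lambda'=\theta-(w')^{-1}\theta$, the identity $\sigma_kw^{-1}=(w')^{-1}$ yields $\lambda'=\sigma_k\bullet\lambda$, which is precisely the relationship between $\lambda$ and $\mu$ in Theorem~\ref{thm:rec}. The key move is to invoke that theorem with its symbol $\lambda$ taken to be our $\lambda'$ (so that its $\mu$ becomes our $\lambda$). Setting $d:=d_\theta(w'\alpha_k)=-\lang\theta,w\alpha_k\rang$, which is strictly positive since $w\alpha_k<0$ (because $\sigma_k\in\mathcal{R}(w)$) and $\theta$ is strongly dominant, we get $\delta=d$ in Theorem~\ref{thm:rec}.

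The critical observation is that the extra coefficients appearing in the recurrence---namely $a_{\lambda'-\nu\alpha_k}$ (or $a_{\lambda'-m\alpha_k}$) on one side, and $a_{\lambda+\nu\alpha_k}$ (or $a_{\lambda+m\alpha_k}$) on the other---all vanish. Both halves of Lemma~\ref{lem:ray} (applied with $\sigma_k\in\mathcal{L}(w^{-1})=\mathcal{R}(w)$ to the vertices $\lambda$ and $\lambda'$ respectively) place these lattice points strictly outside $\Pi_\theta$, so Theorem~\ref{thm:support} makes their coefficients vanish. The recurrence then collapses to a two-term relation: $a_\lambda=-|p|^{d-1}a_{\lambda'}$ when $d\equiv 0\pmod m$, and $a_\lambda=|p|^{d-1}(g^*_{-\|\alpha_k\|^2 d}(p))^{-1}a_{\lambda'}$ otherwise.

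On the combinatorial side, $l(w')=l(w)-1$ together with \eqref{eqn:phiw2} gives $\Phi(w^{-1})=\Phi((w')^{-1})\sqcup\{w'\alpha_k\}$, so by \eqref{eqn:gauss} and $W$-invariance of root length, $A_\lambda/A_{\lambda'}=g_{\|\alpha_k\|^2}(p^{d-1},p^d)=|p|^{d-1}g_{\|\alpha_k\|^2 d}(p)$. Reconciling the two sides reduces to a pair of Gauss-sum identities: one checks that $n\mid\|\alpha_k\|^2 d$ exactly when $m\mid d$, so \eqref{eqn:gauss3} gives $g_{\|\alpha_k\|^2 d}(p)=-1$ in the case $d\equiv 0\pmod m$; and in the complementary case, \eqref{eqn:gauss2} rearranges to $(g^*_{-\|\alpha_k\|^2 d}(p))^{-1}=g_{\|\alpha_k\|^2 d}(p)$. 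The main obstacle we anticipate is this Gauss-sum bookkeeping across the two cases; the recurrence-based reduction itself is essentially forced by the geometry of $\Pi_\theta$ and Lemma~\ref{lem:ray}. We note in passing that the recurrence argument never actually invokes the stability hypothesis $n\gg r$: stability enters only via the fact (from \cite{wmds2,wmds4}) that non-vertex coefficients of $N(\x;\ell)$ vanish in this regime, which is what makes the theorem a complete description rather than merely a computation of the vertex coefficients.
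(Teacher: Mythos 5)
Your proof is correct and is essentially the paper's own argument: induct on $l(w)$, apply the recurrence of Theorem~\ref{thm:rec} with its $\lambda$ at the shorter vertex and its $\mu$ at the longer one, kill the outer terms by Lemma~\ref{lem:ray} together with Theorem~\ref{thm:support}, peel off one Gauss sum from $A_\lambda$ via \eqref{eqn:phiw2}, and match the two cases using \eqref{eqn:gauss}, \eqref{eqn:gauss3}, \eqref{eqn:gauss2}. Your derivation of $\delta=d_\theta(w'\alpha_k)$ via $W$-invariance of the pairing is a bit cleaner than the paper's coordinate computation, and your closing remark about the role of the stability hypothesis matches the remark the paper makes after the theorem.
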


\begin{proof}
We follow \cite{ppart} and induct on $l(w)$. Assume that $a_0=1$. If $l(w)=0$, we have an empty product on the right hand side of \eqref{eqn:stabc} and the statement holds trivially. Suppose that $a_\lambda=A_\lambda$ on all $v \in W$ with $l(v) \leq l(w)$. Let $\mu=\theta-\sigma_jw^{-1}\theta$, with $l(\sigma_jw^{-1})=l(w^{-1})+1$. In other words, we assume $\mu=\sigma_j\bullet \lambda$ and $\mu<\lambda$. Applying \eqref{eqn:rec1} or \eqref{eqn:rec2} with $a_\mu$ on the right-hand side, the outer terms vanish by Lemma \ref{lem:ray}. We have \(a_\mu=-|p|^{\delta(\lambda)-1}a_\lambda\) if $\delta(\lambda) \equiv 0 \mod n(\alpha_j)$ and \(a_\mu=(g^\ast_{-\|\alpha_j\|^2\delta(\lambda)}(p))^{-1}|p|^{\delta(\lambda)-1}a_\lambda.\) otherwise. On the other hand, it follows from \eqref{eqn:phiw2} and our induction hypothesis that
\begin{align*}A_\mu&=\prod_{\alpha \in \Phi(\sigma_jw^{-1})} g_{\|\alpha\|^2}(p^{d_\theta(\alpha)-1},p^{d_\theta(\alpha)})\\
&=\prod_{\alpha \in \Phi(w^{-1})}g_{\|\alpha\|^2}(p^{d_\theta(\alpha)-1},p^{d_\theta(\alpha)})\times g_{\|w\alpha_j\|^2}(p^{d_\theta(w\alpha_j)-1},p^{d_\theta(w\alpha_j)})\\
&=a_\lambda \times g_{\|w\alpha_j\|^2}(p^{d_\theta(w\alpha_j)-1},p^{d_\theta(w\alpha_j)})
\end{align*}
It remains to show 
\[
g_{\|w\alpha_j\|^2}(p^{d_\theta(w\alpha_j)-1},p^{d_\theta(w\alpha_j)})=\left\{\begin{array}{cl} -|p|^{\delta(\lambda)-1}&\mbox{ if $\delta(\lambda) \equiv 0 \mod n(\alpha_j)$}\\
(g^\ast_{-\|\alpha_j\|^2\delta(\lambda)}(p))^{-1}|p|^{\delta(\lambda)-1} &\mbox{ otherwise.}\end{array}\right.
\]

When $\delta(\lambda) \equiv 0 \mod {n(\alpha_j)}$, we have $g^\ast_{-\|\alpha_j\|^2\delta(\lambda)}(p)=-1$. Otherwise, we use \eqref{eqn:gauss2} and \eqref{gdef} to rewrite \((g^\ast_{-\|\alpha_j\|^2\delta(\lambda)}(p))^{-1}|p|^{\delta(\lambda)-1}=g_{\|\alpha_j\|\delta(\lambda)}(p)|p|^{\delta(\lambda)-1}.\) From \eqref{eqn:gauss}, it suffices to show that $\delta_j(\lambda)=d_\theta(w\alpha_j)$.
By definition
\begin{align*}
\delta(\lambda)
&=d(\mu-\lambda)=d\left(\left(\sum_i k_i c(i,j)\right)\alpha_j-(l_j+1)\alpha_j\right)
=\sum k_i c(i,j)-l_j-1.
\end{align*}
Also
\begin{align*}
d_\theta(w\alpha_i)&=\lang \theta, w\alpha_j\rang\\
&=\lang \sigma_jw^{-1}\theta,-\alpha_j\rang\\
&=\lang \theta-\sum k_i\alpha_i,-\alpha_j\rang\\
&=-l_j-1+\lang\sum k_i\alpha_i,\alpha_j\rang\\
&=\sum k_i c(i,j)-l_j-1.
\end{align*}
This implies the result.
\end{proof}
We remark that in the stable case $n \gg r$, the only coefficients of $N(\x;\ell)$ are those associated to the vertices of $\Pi_\theta$. Thus Theorem \ref{thm:stable} shows that in the stable case the $p$-parts of \cite{wmds2,wmds4} and \cite{wmnds} agree.

\section{Unstable coefficients}
\label{sec:unstable}
In this section we address the extent to which Theorem \ref{thm:rec} determines $N(\x;\ell)=\sum a_\lambda\x^\lambda$. Specifically, in Theorem \ref{thm:dim} we find the dimension of space of all polynomials satisfying the recurrence relations of Theorem \ref{thm:rec}. Throughout this section, for $\ell=(l_1,\ldots,l_r) \in (\Z_{\geq 0})^r$ we let $\theta=\sum_{i=1}^r (l_i+1)\varpi_i$. Recall that we have defined $\Theta$ to be the set of all dominant weights in the irreducible representation of highest weight $\theta$. We now let $\Theta^+$ be subset of {\em regular} dominant weights. 

The following theorem is a generalization of \cite[Theorem 5.7]{ppart} and gives an upper bound on the dimension of the space of polynomials satisfying Theorem \ref{thm:rec}.
\begin{thm}
\label{thm:unstab}
Suppose that the coefficients $a_{\theta-\xi}$ of $N(\x;\ell)$ are known for all $\xi \in \Theta^+$. Then $N(\x;\ell)$ is completely determined by the relations of Theorem \ref{thm:rec} after setting $a_0=1$. 
\end{thm}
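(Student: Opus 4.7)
The plan is to show, by a controlled propagation of the recurrence relations, that every coefficient $a_\lambda$ of $N(\x;\ell)$ is forced by the prescribed data $\{a_{\theta-\xi}\}_{\xi\in\Theta^+}$ together with the normalization $a_0=1$. By Theorem~\ref{thm:support}, only lattice points of $\Pi_\theta$ can carry nonzero coefficients, and these points all take the form $\theta-w\xi$ for some $w\in W$ and some dominant $\xi\in\Theta$; it therefore suffices to determine $a_{\theta-w\xi}$ for every such pair.

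I would run a double induction. The outer loop runs over $\xi\in\Theta$ ordered by the height $d(\theta-\xi)$, starting from the vertex layer $\xi=\theta$ (where $a_0=1$ is given) and working toward deeper layers; the inner loop, for a fixed $\xi$, is on the Weyl length $l(w)$. For $\xi\in\Theta^+$ the prescribed seed $a_{\theta-\xi}$ handles the case $w=e$. For $l(w)>0$, choose $\sigma_k\in\mathcal{L}(w)$, set $u=\sigma_k w$, and apply Theorem~\ref{thm:rec} at $\lambda=\theta-w\xi$, so that $\mu=\sigma_k\bullet\lambda=\theta-u\xi$ has $l(u)<l(w)$ and $a_\mu$ is known by the inner hypothesis. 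Solving \eqref{eqn:rec1} or \eqref{eqn:rec2} for $a_\lambda$ then expresses it in terms of $a_\mu$ and the two tail coefficients $a_{\lambda-b\alpha_k}$ and $a_{\mu+b\alpha_k}$. I would argue that each such tail point either lies outside $\Pi_\theta$ and hence vanishes by Theorem~\ref{thm:support} (as in the ray argument of Lemma~\ref{lem:ray}), or corresponds to a lattice point on a strictly earlier layer $\xi'$ with $d(\theta-\xi')<d(\theta-\xi)$, so is already known. Verifying this geometric alternative---that a tail is never stuck on the current layer $\xi$---is the main obstacle, and I expect it to require a careful analogue of Lemma~\ref{lem:ray} adapted to the weight polytope of an arbitrary $\xi$.

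It remains to explain why the non-regular layers $\xi\in\Theta\setminus\Theta^+$ require no seed. For such $\xi$ the stabilizer $W_\xi$ is nontrivial, so we may choose $\sigma_k\in W_\xi$; then $\sigma_k\bullet(\theta-\xi)=\theta-\xi$ and $\delta_k(\theta-\xi)=0$. With $\lambda=\mu=\theta-\xi$ and $m=n(\alpha_k)$, the relation \eqref{eqn:rec1} collapses to
\[
(1+|p|)\,a_{\theta-\xi}=|p|^{1+m}\,a_{\theta-\xi-m\alpha_k}+|p|^{-m}\,a_{\theta-\xi+m\alpha_k}.
\]
The two neighbors on the right lie on strictly deeper layers, so are already determined by the outer induction; this forces $a_{\theta-\xi}$, and then the inner induction on $l(w)$ propagates the value to the entire orbit $\{a_{\theta-w\xi}:w\in W\}$. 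Combining both cases closes the double induction and establishes the theorem.
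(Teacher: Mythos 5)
Your approach is essentially the same as the paper's: a two-level induction, the outer layer running over dominant weights $\xi\in\Theta$ (the paper induces on the partial order $\succeq$; you use the linear refinement by $d(\theta-\xi)$, which is equivalent for this purpose), the inner layer on $l(w)$, splitting into the regular and non-regular cases, and using Theorem~\ref{thm:rec} together with a geometric control of tail terms to propagate values through orbits. Your collapsed relation for non-regular $\xi$ with $\sigma_k\in W_\xi$ is correct, as is the observation that $\theta-\xi\mp m\alpha_k$ lie in the orbit of the dominant conjugate of $\xi+m\alpha_k$, which strictly dominates $\xi$; both the paper's regular and non-regular cases rely on exactly this kind of fact, which it cites as \cite[Lemmas 5.3 and 5.5]{ppart} rather than re-proving. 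Two points worth noting. First, you correctly identify the analogue of Lemma~\ref{lem:ray} for general $\xi$ as the load-bearing geometric input and honestly flag that you have not supplied it; the paper handles this by citation, so this is not a defect relative to the paper, but a complete argument does require it. Second, there is a directional slip in your non-regular case: you say the two neighbors lie on ``strictly deeper layers,'' but by your own convention (ordering by $d(\theta-\xi)$, starting at $\xi=\theta$ and increasing) they actually lie on \emph{shallower} layers --- their dominant conjugate $\xi^*$ satisfies $\xi^*\succ\xi$, hence $d(\theta-\xi^*)<d(\theta-\xi)$ --- which is what makes them already determined. Your conclusion is right, but the word ``deeper'' contradicts the stated direction of the outer induction and should read ``earlier'' or ``shallower.''
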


Our argument follows \cite{ppart}. We consider $w$-orbits of the coefficients. Recall that by Theorem \ref{thm:support}, the support of $N(\x;\ell)=\sum a_\lambda \x^\lambda$ is contained among all $\lambda=\theta-w\xi$ such that $w \in W$ and $\xi \in \Theta$. For any $\xi \in \Theta$, define $O_\xi:=\{\theta-w\xi:w \in W\}$ as the $W$-orbit of the coefficient $a_{\theta-\xi}$ under the $\bullet$ action. Let $\mathcal{O}=\{O_\xi:\xi \in \Theta\}$ be the set of all such orbits. There is a natural partial order on $\mathcal{O}$ given by the poset relation on the weights: we say $O_\xi \leq O_{\xi'}$ if and only if $\xi \preceq \xi'$. Under this identification of $\xi$  with $\theta-\xi$, the condition $\xi \preceq \xi'$ becomes $(\theta-\xi) \succeq (\theta-\xi')$. 

\begin{proof}
We induct on the poset $\cO$. First, we know all the points in the orbit $O_\theta$ by Theorem \ref{thm:stable}. Now fix $\xi \in \Theta$ such that $\xi \neq \theta$, and assume that we have determined the coefficients attached to all orbits $O$ with $O>O_\theta$. We consider two cases: $\xi$ regular or not. 

Assume that $\xi$ is regular, and let $\lambda=\theta-\xi$. By assumption, the value of the coefficient $a_\lambda$ associated with $\xi$ is known. We must show that we can determine $a_\delta$ for all $\delta \in O_\xi$. Any such $\delta$ is of the form $\theta-w\xi=w \bullet \lambda$ for $w \in W$ and thus can be obtained from successive application of simple reflections $\sigma_j$. From relation \eqref{eqn:rec1} or \eqref{eqn:rec2}, when we apply $\sigma_j$ to $a_\lambda$, the outer term on the left-hand side is either $\theta-(\xi+\nu\alpha_j)$ or $\theta-(\xi+n\alpha_j)$, where $m=n(\alpha_j)$ and $\nu=m-(\delta(\lambda))_m$. In both cases, \cite[Lemma 5.3]{ppart} implies that these terms come from a previously determined orbit $O_{\xi'}$ with $O_{\xi'}>O_{\xi}$. 
Under the action $\sigma_j\bullet(\theta-\xi')$ we obtain the outer term on the right-hand side, so this term also belongs to $O_{\xi'}$ and hence is previously determined. Since we know three out of four terms of the relation, $a_\delta$ is determined.
 
Now, assume that $\xi$ is not regular. Since $\#O_\xi<\#W$, there exists a simple reflection $\sigma_j$ such that $a_\lambda$ is taken to itself under relation \eqref{eqn:rec1} or \eqref{eqn:rec2}. By \cite[Lemma 5.5]{ppart}, all other $a_{\lambda'}$ involved in the recurrence are predetermined; therefore we know $a_\lambda$. We may now successively apply \eqref{eqn:rec1} or \eqref{eqn:rec2} to determine the remaining coefficients $\theta-w\xi$ in the orbit $O_\xi$.
\end{proof}

\begin{cor}
\label{cor:unstab}
When $\ell=(0,\ldots, 0)$, $N(\x;\ell)$ is completely determined by the relations from Theorem \ref{thm:rec} after setting the constant term to 1.
\end{cor}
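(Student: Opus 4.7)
The plan is to deduce Corollary~\ref{cor:unstab} directly from Theorem~\ref{thm:unstab}. The latter requires the coefficients $a_{\theta-\xi}$ to be specified for every $\xi$ in the set $\Theta^+$ of regular dominant weights of the representation $V(\theta)$, and my strategy is to show that when $\ell = (0,\ldots,0)$ this set collapses to the singleton $\{\rho\}$. Once this is in hand, the only datum Theorem~\ref{thm:unstab} needs is $a_{\theta-\rho} = a_0$, so the normalization $a_0 = 1$ is all that Theorem~\ref{thm:rec} needs in order to pin down every remaining coefficient of $N(\x;\ell)$.

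The real work is therefore the claim $\Theta^+ = \{\rho\}$ in the case $\theta = \sum_i \varpi_i = \rho$. Given $\xi \in \Theta^+$, regularity and integrality let me write $\xi = \sum_i m_i \varpi_i$ with each $m_i \geq 1$, so $\eta := \xi - \rho = \sum_i (m_i-1)\varpi_i$ is itself dominant. Since $\xi$ is a weight of $V(\rho)$, the difference $\rho - \xi = -\eta$ must also be expressible as a nonnegative integer combination $\sum_i c_i \alpha_i$ of simple roots. I then plan to pair this relation with $\eta$ in the $W$-invariant inner product, obtaining
\[
-(\eta,\eta) \;=\; \sum_i c_i (\alpha_i,\eta) \;\geq\; 0,
\]
since dominance of $\eta$ forces $(\alpha_i,\eta) \geq 0$ for every simple root. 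This bounds $(\eta,\eta) \leq 0$, so $\eta = 0$ and hence $\xi = \rho$.

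The main obstacle I anticipate is just being careful about the two notions of ``smaller than $\theta$'' in play: the root-order condition that $\theta - \xi$ be a nonnegative integer combination of simple roots, which is what ``$\xi$ is a weight of $V(\theta)$'' means, versus the fundamental-weight partial order used elsewhere in the paper. Once that bookkeeping is sorted the inner product computation is a one-line verification, and the appeal to Theorem~\ref{thm:unstab} then delivers the corollary immediately.
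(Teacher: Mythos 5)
Your argument is correct and is exactly the route the paper intends: the paper's ``proof'' is merely a citation to \cite[Corollary 5.8]{ppart}, and that corollary is established in the same way --- by invoking the analogue of Theorem~\ref{thm:unstab} and observing that for $\theta=\rho$ the set $\Theta^+$ of regular dominant weights of $V(\rho)$ is $\{\rho\}$, which is what your inner-product computation verifies. Your write-up fills in details the paper leaves to the reference, but does not deviate from it.
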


\begin{proof}
See \cite[Corollary 5.8]{ppart}.
\end{proof}

We now show that any polynomial satisfying the recurrence relations of Theorem \ref{thm:rec} can be written as the weighted sum of $\C$-linearly independent shifted $p$-parts. The key to this result is the following lemma, which allows us to shift action \eqref{eqn:act}.

\begin{lem}[\cite{chintaoffen,mcnam2}]
\label{lem:shiftact}
Let $f(\x)\in \C[\Lambda]$ be a rational function. Write the $|_\ell$ action described in \eqref{eqn:act} as $|_\theta$. Then for $\xi \in \Theta^+$, we have \[(f|_\xi w)(\x)=(f\x^{\theta-\xi}|_\theta w)(\x)\x^{\xi-\theta}.\] Consequently, $f(\x)$ satisfies the recurrence relations of Theorem \ref{thm:rec} for $\xi$ if and only if $f(\x)\x^{\theta-\xi}$ satisfies the recurrence relations of Theorem \ref{thm:rec} for $\theta$.

\end{lem}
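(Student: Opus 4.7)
The plan is to establish the identity first for $w=\sigma_k$ a simple reflection by direct computation with the definitions, then extend to arbitrary $w$ by an induction on length, and finally deduce the statement about recurrence relations as a formal consequence.

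For the base case, write $\theta=\sum(l^\theta_i+1)\varpi_i$ and $\xi=\sum(l^\xi_i+1)\varpi_i$, and let $\bullet$, $\delta^\theta$ (respectively $\bullet_\xi$, $\delta^\xi$) denote the $\bullet$-action and $\delta$-function built from $\theta$ (respectively from $\xi$). Using $\sigma_k\bullet\beta-\beta=((l^\theta_k+1)-\langle\beta,\alpha_k\rangle)\alpha_k$, one computes $\delta^\theta_k(\beta)=\langle\theta-\beta,\alpha_k\rangle$ and analogously $\delta^\xi_k(\beta)=\langle\xi-\beta,\alpha_k\rangle$. In particular,
\[
\delta^\theta_k(\beta+\theta-\xi)=\delta^\xi_k(\beta),
\]
so all residues modulo $n(\alpha_k)$ and all Gauss-sum indices appearing in $P^\theta_{\beta+\theta-\xi,k}$ and $Q^\theta_{\sigma_k\bullet(\beta+\theta-\xi),k}$ agree on the nose with those in $P^\xi_{\beta,k}$ and $Q^\xi_{\sigma_k\bullet_\xi\beta,k}$. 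The only discrepancy between the two pairs of rational factors is an overall multiplicative $(|p|x_k)^{\langle\theta-\xi,\alpha_k\rangle}$ coming from the replacement of $l^\xi_k+1$ by $l^\theta_k+1$ in the exponent of $(|p|x_k)$. This is exactly what is needed to absorb $(\sigma_k\x)^{\theta-\xi}=(|p|x_k)^{-\langle\theta-\xi,\alpha_k\rangle}\x^{\theta-\xi}$ produced by the change of variables, giving
\[
(f|_\xi\sigma_k)(\x)\,\x^{\theta-\xi}=(f\,\x^{\theta-\xi}|_\theta\sigma_k)(\x).
\]

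For general $w$ I would induct on $l(w)$, writing $w=w'\sigma_k$ with $l(w)=l(w')+1$. Since $|_\theta$ and $|_\xi$ are both $W$-actions, applying the inductive hypothesis to $f|_\xi w'$ inside $(f|_\xi w)(\x)=((f|_\xi w')|_\xi\sigma_k)(\x)$ and then the base case produces two $\x^{\theta-\xi}$ and $\x^{\xi-\theta}$ factors that collapse, leaving the desired identity for $w$.

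For the ``consequently'' statement, recall that the recurrences of Theorem \ref{thm:rec} come from the functional equation $N(\x;\ell)=(D(\x)/D(\sigma_k\x))(N|_\ell\sigma_k)(\x)$ and that $D(\x)$ does not depend on $\ell$. If $f(\x)$ satisfies the recurrences attached to $\xi$, substituting the first part of the lemma into this equation and cancelling $\x^{\xi-\theta}$ shows that $g(\x)=f(\x)\x^{\theta-\xi}$ satisfies the same equation with $\theta$ in place of $\xi$, and hence the corresponding recurrences; the reverse implication is symmetric. The main obstacle will be the careful bookkeeping in the base case: several exponent shifts (from the $\delta$ values, from the two different $\bullet$-actions, and from the change of variables $\sigma_k\x$ acting on $\x^{\theta-\xi}$) must combine to the single clean factor $(|p|x_k)^{\langle\theta-\xi,\alpha_k\rangle}$, with residues modulo $n(\alpha_k)$ and Gauss-sum indices agreeing exactly rather than merely up to this shift.
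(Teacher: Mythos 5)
Your base-case calculation is exactly the paper's demonstration: you show $\delta_{\theta,k}(\beta+\theta-\xi)=\delta_{\xi,k}(\beta)$ so the $P$ and $Q$ factors match up to a single power $(|p|x_k)^{\langle\theta-\xi,\alpha_k\rangle}$, which is cancelled by the $(\sigma_k\x)^{\theta-\xi}$ from the change of variables (the paper phrases the exponent bookkeeping through a constant $C$ and a Cartan-matrix base change, but arrives at the same identity). The paper itself only exhibits the monomial/simple-reflection case and cites \cite{chintaoffen,mcnam2} for the general statement; your extension by length induction and the deduction of the ``consequently'' clause from the functional equation are the natural way to complete that sketch, so the two arguments are the same in substance.
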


\begin{proof} 
For $\Phi$ type $A$, Chinta and Offen \cite{chintaoffen} define a Weyl group action on rational functions that is independent of $\theta$. Equation \cite[(9.2)]{chintaoffen} shows that this $\theta$-independent action is equivalent to action \eqref{eqn:act} after the shifting described in the statement of the lemma. In \cite{mcnam2}, McNamara generalizes the action of \cite{chintaoffen} to all $\Phi$. For the reader's convenience, we demonstrate the case when $f(\x)$ is a monomial.

Let $f_\beta(\x)=\x^\beta\in A_\beta$. 

We claim
\((f_\beta|_\xi w)(\x)\x^{\theta-\xi}=(f_\beta\x^{\theta-\xi}|_\theta w)(\x).\)
To see this, let $\sigma_k$ be a simple reflection. Writing $\xi=\sum (r_i+1)\varpi_i$, we put $(\xi)_k=b_k$. Recall that $P_{\beta,\xi,k}$ and $Q_{\beta,\xi,k}$ depend only on $\delta_{\xi,k}(\beta)$ and $r_k$. Note that for $w \in W$, \eqref{eqn:bullet} implies

\[w \bullet_{\xi} \lambda+(\theta-\xi)=w \bullet_\theta {(\lambda+\theta-\xi)}.\]

It follows that $\delta_{\theta,k}(\beta+\theta-\xi)=\delta_{\xi,k}(\beta)$. Let $C=-\sum_{i} c(i,k)$. A simple calculation shows that $(|p|x_k)^C=(f_{\beta+\theta-\xi}(\sigma_k\x))/(f_\beta(\sigma_k\x)\x^{\theta-\xi})$. Now put $\theta-\xi=\sum k_i\alpha_i$. 
Using the Cartan matrix to base change, we have $(\theta)_k-(\xi)_k=\sum k_ic(i,k)=-C$. The claim now follows from the definition of the action in \eqref{eqn:act}. \end{proof}

\begin{thm}
\label{thm:dim} The dimension of the $\C$-vector space of polynomials satisfying the recurrence relations of Theorem \ref{thm:rec} is equal to $\#\Theta^+$.
\end{thm}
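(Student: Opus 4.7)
The plan is to prove $\dim\mathcal{V} = \#\Theta^+$ by matching inequalities, where $\mathcal{V}$ denotes the $\C$-vector space of polynomials in $\C[\Lambda]$ satisfying the recurrences of Theorem \ref{thm:rec} for twisting weight $\theta$.

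For the upper bound $\dim\mathcal{V} \le \#\Theta^+$, I would consider the linear evaluation map
\[
\Psi \colon \mathcal{V} \longrightarrow \C^{\Theta^+}, \qquad \sum a_\lambda\,\x^\lambda \;\longmapsto\; (a_{\theta-\xi})_{\xi\in\Theta^+},
\]
and show it is injective. Note that $\theta\in\Theta^+$ since $\theta$ is strongly dominant, so any $f\in\ker\Psi$ has $a_0 = 0$. Every step of Theorem \ref{thm:stable}'s argument, and then of the orbit-by-orbit induction in the proof of Theorem \ref{thm:unstab}, is linear in the initial data (outer terms of each recurrence vanish by Theorem \ref{thm:support} applied to $f$, and the remaining transfer equations are linear in the surviving coefficients), so the vanishing propagates orbit by orbit and $f \equiv 0$.

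For the lower bound I would exhibit $\#\Theta^+$ linearly independent elements of $\mathcal{V}$ by shifting smaller $p$-parts: for each $\xi\in\Theta^+$ set
\[
f_\xi(\x) \;:=\; N(\x;\xi)\,\x^{\theta-\xi}.
\]
Since $\xi$ is strongly dominant, $N(\x;\xi)$ is a legitimate $p$-part satisfying the recurrences of Theorem \ref{thm:rec} for twisting $\xi$; since $\xi\in\Theta$ the shift $\theta-\xi$ is a nonnegative integral combination of simple roots, so $f_\xi$ is actually a polynomial, and Lemma \ref{lem:shiftact} gives $f_\xi\in\mathcal{V}$. Linear independence of $\{f_\xi\}_{\xi\in\Theta^+}$ follows by analyzing the evaluation matrix
\[
M_{ij}\;:=\;[\x^{\theta-\xi_i}]\,f_{\xi_j}\;=\;[\x^{\xi_j-\xi_i}]\,N(\x;\xi_j)
\]
after listing $\Theta^+ = \{\xi_1,\ldots,\xi_N\}$ as any linear extension of the root-theoretic order ``$\mu\preceq\nu$ iff $\nu-\mu\in\sum_k\Z_{\ge 0}\alpha_k$.'' Theorem \ref{thm:support} places the support of $N(\x;\xi_j)$ inside $\Pi_{\xi_j}\subseteq\sum_k\R_{\ge 0}\alpha_k$, so $M_{ij}\ne 0$ forces $\xi_i\preceq\xi_j$, hence $i\le j$; the diagonal entries are $M_{ii} = [\x^0]N(\x;\xi_i) = 1$ by the standard normalization, so $M$ is upper triangular with nonzero diagonal and hence invertible.

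The step that I expect to require the most care is the injectivity of $\Psi$: Theorem \ref{thm:unstab} is phrased under the normalization $a_0 = 1$ and its base case appeals to Theorem \ref{thm:stable} (also normalized), so one must unfold that inductive proof and verify that every recurrence invocation is linear in the prescribed data (so that it specializes correctly to the all-zero input). Once that bookkeeping is done, the lower bound is essentially a routine packaging of Lemma \ref{lem:shiftact} with the support description of Theorem \ref{thm:support}.
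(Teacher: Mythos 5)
Your proof is correct, and it is organized rather differently from the paper's. The paper's argument is a constructive one: starting from an arbitrary $\mathcal{N}(\x)$ in the space $\mathcal{V}$, it greedily subtracts $m_\xi N(\x;\xi)\x^{\theta-\xi}$ for $\xi \in \Theta^+$ in decreasing order, at each step showing (via the regularity discussion in the proof of Theorem \ref{thm:unstab}) that the next orbit contributing nontrivially comes from a regular dominant weight, until the remainder vanishes. This produces the explicit decomposition $\mathcal{N}(\x)=\sum_{\xi\in\Theta^+} m_\xi N(\x;\xi)\x^{\theta-\xi}$ and hence spanning; linear independence is then asserted via a ``disjoint support'' observation. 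You instead bound the dimension from two sides: the evaluation map $\Psi$ is injective (this is the content of Theorem \ref{thm:unstab}, read as a linear statement, which is exactly how the paper also invokes it), and the $\#\Theta^+$ shifted $p$-parts $f_\xi = N(\x;\xi)\x^{\theta-\xi}$ are linearly independent by a clean upper-triangularity argument on the evaluation matrix $M_{ij}$, using Theorem \ref{thm:support} and the normalization $a_0=1$. Both proofs rest on the same ingredients (Theorems \ref{thm:support}, \ref{thm:stable}, \ref{thm:unstab} and Lemma \ref{lem:shiftact}), but your triangularity argument for linear independence is sharper than the paper's ``disjoint support'' claim (the supports of the $f_\xi$ are in general \emph{not} pairwise disjoint — only the leading orbits $O_\xi$ are — so triangularity is the honest way to see independence). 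What you give up relative to the paper is the explicit decomposition formula, which the greedy construction yields as a by-product; the dimension count itself is unaffected. One small point worth making explicit in your write-up: you apply Theorem \ref{thm:support} to an arbitrary $f\in\mathcal{V}$, not just to $N(\x;\ell)$ itself, which is legitimate because the proof of Theorem \ref{thm:support} uses only polynomiality and the recurrences of Theorem \ref{thm:rec} — but that generalization should be stated, since the theorem as written refers specifically to $N(\x;\ell)$.
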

\begin{proof}  Denote $N(\x;\ell)$ by $N(\x;\theta)$. Let $\mathcal{N}(\x)=\sum_{\lambda} b_\lambda \x^\lambda$ be any polynomial satisfying the recurrence relations of Theorem \ref{thm:rec}. By Theorem \ref{thm:unstab},  the coefficients $b_{\theta-\xi}$ for $\xi \in \Theta^+$ completely determine $\mathcal{N}(\x)$. Let $m_\theta=b_0$ be the constant coefficient of $\mathcal{N}(\x)$; Theorem \ref{thm:stable} shows that the stable coefficients of $m_\theta N(\x;\theta)$ and $\mathcal{N}(\x)$ agree. Write 
\[
\mathcal{N}(\x)=m_\theta N(\x;\theta)+E_\theta(\x),
\]
where $E_\theta(\x)$ is a polynomial that also satisfies the recurrence relations of Theorem \ref{thm:rec} and is supported on the orbits $O_{\xi'}=\{\x^{\theta-w\xi'}:w\in W\}$. Let $\mathcal{S}_\theta$ denote the set of corresponding $\xi'$. Since $\theta$ is the unique maximal element of $\Theta$, all such $\xi'$ satisfy $\xi'\prec\theta$.  If $S_\theta$ is empty then we have expressed $\mathcal{N}(\x)$ as a sum of shifted $p$-parts, so assume $S_\theta$ is nonempty. 

Choose $\xi\in \mathcal{S}_\theta \cap \Theta$ maximal with respect to the partial order on $L$. If $\xi$ is not regular, then there exists a simple reflection taking $\xi$ to itself. It follows again from Theorem \ref{thm:unstab} that the $\x^\xi$ coefficients of both $\mathcal{N}(\x)$ and $N(\x;\theta)$ are completely determined by the coefficients associated to orbits $O_{\xi''}$, where $\xi''$ satisfies $\xi''\succ \xi$. If the $\x^\xi$ coefficient of $E_\theta(\x)$ is zero, then so are all coefficients of $E_\theta(\x)$ in the $\xi$-orbit and we contradict $\xi \in \mathcal{S}_\theta$. Otherwise, we contradict the maximality of $\xi$. Thus, $\xi$ is regular.

Applying Lemma \ref{lem:shiftact}, $E_\theta(\x)\x^{\xi-\theta}$ satisfies \eqref{eqn:rec1} and \eqref{eqn:rec2} for the twisting parameter $\xi$. Let $m_\xi$ be the constant coefficient of $E_\theta(\x)\x^{\xi-\theta}$, and write
\[
E_\theta(\x)=m_\xi N(\x;\xi)\x^{\theta-\xi}+E_\xi(\x),
\]
where $E_\xi(\x)$ is supported on the orbits $O_{\xi'}$ with $\xi'\prec\xi$. By the same argument as above, the maximal such $\xi'$ is regular. Continuing in this fashion, after at most $\#\Theta^+$ iterations, we may write
\begin{equation}
\label{eqn:sum}
\mathcal{N}(\x)=\sum_{\xi \in \Theta^+} m_{\xi}N(\x;\xi)\x^{\theta-\xi},\hspace{1cm} m_\xi \in \C.
\end{equation} 
We note that the $N(\x;\xi)\x^{\theta-\xi}$ have disjoint support and thus are clearly $\C$-linearly independent.

\end{proof}

\subsection*{Acknowledgements}
I thank Paul E. Gunnells, who advised the thesis that led to this work for invaluable advice and guidance during this project. I also thank ICERM and the organizers and participants of the Spring 2013 semester on Automorphic Forms, Combinatorial Representation Theory and Multiple Dirichlet Series for their support during the final stages of my dissertation. In particular, thank you to Gautam Chinta, Ben Brubaker, Solomon Friedberg, Daniel Bump, Jeff Hoffstein, Anna Pusk\'{a}s, Ian Whitehead, and TingFang Lee for helpful conversations. Finally, I would like to thank the referee for helpful comments.

\end{document}